\newcommand{\Z}{\mathbb Z}
\newcommand{\R}{\mathbb R}
\newcommand{\N}{\mathbb N}
\newcommand{\F}{\mathcal F}
\newcommand{\1}{\mathbbm 1}
\newcommand{\eps}{\varepsilon}
\newcommand{\dd}{\text{d}}
\newcommand{\cadlag}{c\`{a}dl\`{a}g }
\newcommand{\ccadlag}{c\`{a}dl\`{a}g}
\newcommand{\Levy}{L\'{e}vy }
\renewcommand{\P}{\mathbb P}
\newcommand{\E}{\mathbb E}
\newcommand{\M}{\mathcal M}
\newcommand{\G}{\mathcal G}
\renewcommand{\S}{\mathcal S}
\renewcommand{\o}{o}
\DeclareMathOperator{\isDistr}{\stackrel{d}{=}}
\numberwithin{equation}{section}
\newtheorem{theorem}{Theorem}[section]
\newtheorem{proposition}[theorem]{Proposition}
\newtheorem{lemma}[theorem]{Lemma}
\newtheorem{corollary}[theorem]{Corollary}
\newtheorem{conjecture}[theorem]{Conjecture}
\newtheorem*{claim}{Claim}
\newtheorem*{question}{Question}
\newtheorem*{example}{Example}
\theoremstyle{remark}
\newtheorem{remark}[theorem]{Remark}
\theoremstyle{definition}
\newtheorem{definition}[theorem]{Definition}
\newtheorem{assumption}{Assumption}
\begin{document}

\title[Comparison of partition functions in a random environment]{Comparison of partition functions in a space-time random environment}

\author{Stefan Junk}
\address[Stefan Junk]
{Technical University of Munich, Munich, Germany}
\email{junk@tum.de}

\keywords{stochastic order, random environment.}
\subjclass[2010]{Primary 60K37; secondary 60E15}

\begin{abstract}
Let $Z^1$ and $Z^2$ be partition functions in the random polymer model in the same environment but driven by different underlying random walks. We give a comparison in concave stochastic order between $Z^1$ and $Z^2$ if one of the random walks has ``more randomness'' than the other. We also treat some related models: The parabolic Anderson model with space-time \Levy noise; Brownian motion among space-time obstacles; and branching random walks in space-time random environments. We also obtain a necessary and sufficient criterion for $Z^1\preceq_{cv}Z^2$ if the lattice is replaced by a regular tree.
\end{abstract}

\maketitle

%%%%%%%%%%%%%%%%%%%%%%%%%%%%%%%%%%%%%%%%%%%%%%%%%%%%%%%%%%%%%%%%%%%%%
\section{A motivating example}\label{sec:intro}

The question discussed in this work arises naturally in the context of the parabolic Anderson model with \Levy noise, and for the purpose of this introduction we further consider the special case of a disastrous Poissonian environment as introduced in \cite{shiga2}. Let $(\omega=\{\omega(t,i)\colon t\in\R_+,i\in\Z^d\},\P)$ be an independent collection of unit intensity Poisson processes and consider the infinite-dimensional system of PDEs
\begin{equation}\label{eq:defPAM}
\begin{split}
u(0,i)&=1\quad\text{ for }i\in\Z^d,\\
\tfrac {\dd }{\dd t}u(t,i)&=\kappa\,(\Delta u)(t,i)-u(t^-,i)\omega(\dd t,i)\quad\text{ for }t\in\R_+,i\in\Z^d
\end{split}
\end{equation}
where $(\Delta u)(t,i)=\tfrac1{2d}\sum_{|i-j|=1}(u(t,j)-u(t,i))$ is the discrete Laplacian. We give an interpretation for the dynamics: Initially every site has mass one. Whenever the environment $\omega(\cdot,i)$ at site $i$ has a jump at time $t$ all mass currently at site $i$ is removed from the system (that is, the mass at site $i$ is set to zero). On the other hand the Laplacian $\Delta$ spreads the mass at $i$ to the neighboring sites at rate $\kappa$. 

We should think of the environment $\omega$ as a random disorder which is smoothed by the Laplacian. The parameter $\kappa$ adjusts the strength of this smoothing and we expect that (in some sense) the solution to \eqref{eq:defPAM} is less random if $\kappa$ is larger. Indeed this is the conclusion of our main result.

Let us introduce a more convenient representation for the solution of \eqref{eq:defPAM}: We can identify $\omega$ with the set of its jump times, i.e. we regard $\omega\subseteq\R_+\times\Z^d$ with $(t,i)\in\omega$ if $\omega(t,i)=\omega(t^-,i)+1$. Following the intuition from before we think of an element $(t,i)\in\omega$ as a disaster at time $t$ at site $i$. Let $(X=\{X(t)\colon t\geq 0\},P^\kappa)$ be a simple random walk on $\Z^d$ with jump rate $\kappa\geq 0$ started in the origin, independent of $\omega$. The graph of $X$ almost surely intersects $\omega$, and we consider the (quenched) survival probability 
\begin{align}\label{eq:defquenchedsurv}
Z_t^\kappa(\omega)\coloneqq P^\kappa\big((s,X(s))\notin\omega\text{ for all }s\in[0,t])\big).
\end{align}
Since we interpret $\omega$ as random space-time disasters this is the probability that the random walk has not been killed up to time $t$. We stress that $Z$ is a random variable depending on $\omega$, and it has been shown \cite[Lemma 2.1]{shiga} that $u(t,0)\isDistr Z^\kappa_t$ for every $t>0$. 

We can use this representation to get an intuition for how there is ``more randomness'' in the solution to \eqref{eq:defPAM} at small $\kappa$: Since $X(t)=0$ for all $t\in\R_+$ under $P^0$,
\begin{align}\label{eq:example_0}
Z^0_t(\omega)=\1\{\omega\cap ([0,t]\times\{0\})=\emptyset\}.
\end{align}
On the other hand for large jump rates $X$ can potentially visit a large part of the environment before time $t$, so from the spatial ergodicity of $\omega$ one expects 
\begin{align}\label{eq:example_infty}
Z^\kappa_t(\omega)\approx\E[Z_t^\kappa(\omega)]=e^{-t}\quad\text{ for }\kappa\gg 0.
\end{align}
At the qualitative level \eqref{eq:example_0} depends on the environment while \eqref{eq:example_infty} is (almost) deterministic. In Theorem \ref{thm:main} we will see that this observation holds more generally and not only for the extreme cases $\kappa=0$ and $\kappa\approx\infty$. More precisely we show that $\kappa\mapsto Z^\kappa_t$ is increasing in concave stochastic order, i.e. for every $t>0$, $\kappa_1\leq\kappa_2$ and all concave, real functions $f$
\begin{align}\label{eq:toy}
\E[f(Z_t^{\kappa_1})]\leq\E[f(Z^{\kappa_2}_t)].
\end{align}

For an intuition note that the survival probability is small if the environment is such that several disasters form a trap close to the origin, forcing the random walk to behave atypically to avoid it. Having a high jump rate is helpful because it allows the random walk more flexibility to go around problematic areas. However \eqref{eq:example_0} shows that a high jump rate is not always optimal since there are environments with no disasters at the origin. In that sense we can view a low jump rate as an all-or-nothing gamble where the survival probability is large if there are no disasters close to the origin, and very small otherwise. This is an indication that comparison in the concave order $\preceq_{cv}$ is the correct notion since it is a measure on the ``riskiness'' of two random variables. 

% An alternative interpretation is that $Z_t^\kappa(\omega)$ represents a variational problem where one has to balance out the benefit of moving to a ``good'' part of the environment  with the entropic cost of the random walk behaving ``untypical''. In that interpretation a higher jump rate is helpful because the random walk can move to good parts of the environment with less cost.

The contribution of this work is to show that the implication
\begin{align}\label{eq:meta}
\text{``more randomness in $X$''}\quad\implies\quad \text{``less randomness in $Z$''}
\end{align}

is a universal property of such models; and to establish a framework for how such models can be treated in a general way regardless of their specific features.

This framework will be introduced in the next section, in which we keep the disastrous parabolic Anderson model as an ongoing example. The proof of the main result follows in Section \ref{sec:main}. We then discuss applications of Theorem \ref{thm:main} in a number of models: We will revisit the parabolic Anderson model with general \Levy noise in Section \ref{sec:ctds}; the random polymer model is obtained by replacing continuous time $\R_+$ with discrete time $\N$ (Section \ref{sec:dtds}); replacing instead discrete space $\Z^d$ by $\R^d$ gives a continuous-space version of the parabolic Anderson model where for simplicity we only discuss the case of a Poissonian environment (Section \ref{sec:ctcs}). In addition there is a close connection between the random polymer model and branching random walks in a random space-time environment, and here the conclusions from Sections \ref{sec:dtds}-\ref{sec:ctcs} can be applied to prove an interesting phase transition in the survival probability (Sections \ref{sec:dt_branching} and \ref{sec:ct_branching}). We also discuss the relevant literature for each model.

Note that the parabolic Anderson model is more commonly studied with $\omega$ replaced by a random field which is static in time or has long-time correlations. This situation is not covered by our assumptions (where the environment has independent increments), and we do not expect that the implication \eqref{eq:meta} holds. We shortly discuss this in Section \ref{sec:static}.

Moreover recall that continuous-time random walks have a convolution property, i.e. if $X$ and $X'$ are independent random walks of jump rate $\kappa$ and $\kappa'$ then $X+X'$ is a random walk of jump rate $\kappa+\kappa'$. We use this fact in the proof of Theorem \ref{thm:main} but we expect that one can also obtain the comparison between partition functions under weaker assumptions. In Section \ref{sec:conj} we make a conjecture for the optimal criterion and in Section \ref{sec:trees} we show that it is indeed necessary and sufficient if the lattice is replaced by a tree.

\section{Notation}\label{sec:setup}

We now want to generalize to a wider class of models in discrete or continuous time and in more general state spaces. Let $T$ be equal to $\N$ or $\R_+$ and let $S$ be a (not necessarily commutative) group. We use ``$+$'' for the group action and $0$ for the neutral element. All the examples below will be on the lattice ($S=\Z^d$) or in Euclidean space ($S=\R^d$). Let $\Sigma$ denote the set of \cadlag paths $x\colon T\to S$ (In discrete time $T=\N$ we drop the requirement of ``\ccadlag''). For $x,y\in\Sigma$ we use $x+y$ to denote the path obtained by coordinate-wise addition
\begin{align*}
(x+y)_t\coloneqq x_t+y_t\quad\text{ for }t\in T.
\end{align*}
By a slight abuse of notation we also use $0$ for the trivial path with $x_t=0$ for all $t\in T$. 

Let $\G$ be the smallest sigma field such that all projections $t\mapsto x_t$ are $\G$-measurable. Moreover let $(\Omega,\F)$ be a probability space and let $\{\theta^x\colon x\in\Sigma\}$ be a family of $\F$-measurable bijections $\theta^x:\Omega\to\Omega$. An element $\omega\in\Omega$ is called an environment while $\theta^x$ is called the shift associated to the path $x$.

\begin{example}
We use the disastrous parabolic Anderson model as an ongoing example: Let $T=\R_+$ and $S=\Z^d$. We choose $\Omega$ the set of locally finite subsets of $\R_+\times\Z^d$ (i.e. $|\omega\cap A|<\infty$ for all $A\subseteq\R_+\times\Z^d$ compact). For $x\in\Sigma$ and $\omega\in\Omega$ the shifted environment $\theta^x(\omega)$ is obtained by moving all disasters in $\omega$ according to the displacement of $x$. That is,
\begin{align}\label{eq:defshift}
(t,i)\in\omega\quad\iff\quad (t,i-x_t)\in(\theta^x(\omega)).
\end{align}
\end{example}

\begin{definition}
Let $F\colon\Omega\times\Sigma\to\R_+$ be $\F\otimes \G$-measurable. We say that $F$ is \textbf{consistent} if $F(\omega,x+y)=F(\theta^y(\omega),x)$ for every $\omega\in\Omega$, $x,y\in\Sigma$. 
\end{definition}
\begin{definition}
Let $F\colon\Omega\times\Sigma\to\R_+$ be a consistent function. For $P\in\M(\Sigma)$ we call
\begin{align}\label{eq:realdefz}
Z(\omega)\coloneqq \int_{\Sigma}F(\omega,x)P(\dd x).
\end{align}
the \textbf{partition function} of $P$.
\end{definition}

Let now $\P$ be a probability measure on $(\Omega,\F)$ and write $\E$ for its expectation. It is important to distinguish it from the law $P$ and expectation $E$ of the random walk. Note that $Z(\omega)=E[F(\omega,X)]$ is a random variable on $(\Omega,\mathcal F,\P)$.

\begin{example}[Continued]
For $\omega\in\Omega$, $x\in\Sigma$ let
\begin{align}\label{eq:defFdis}
F_t(\omega,x)\coloneqq \1\{ (s,x_s)\notin\omega\text{ for all }s\in[0,t]\}
\end{align}
the indicator of the event that $x$ survives until time $t$. Note that $F_t$ is consistent: For $\omega\in\Omega$, $x,y\in\Sigma$ 
\begin{align*}
F_t(\omega,x+y)=1&\iff (s,x_s+y_s)\notin\omega\text{ for }s\in[0,t]\\&\iff (s,y_s)\notin (\theta^x(\omega))\text{ for }s\in[0,t]\iff F_t(\theta^x(\omega),y)=1.
\end{align*}
Let now $\P$ denote the Poisson point process with unit intensity on $\R_+\times\Z^d$ and $P^\kappa$ the law of continuous time simple random walk with jump rate $\kappa$. Then the partition function $Z^\kappa_t$ of $P^\kappa$ is the quenched survival probability from \eqref{eq:defquenchedsurv}.
\end{example}

We make the following assumptions:

\begin{assumption}[Shift invariance]\label{ass:spatial}
$\P(A)=\P\big((\theta^x)^{-1}(A)\big)$ for all $A\in\F$, $x\in\Sigma$.
\end{assumption}

\begin{assumption}[Integrability]\label{ass:inte}
$\E[F(\omega,0)]<\infty$.
\end{assumption}

Under these assumptions the partition function is a well-defined random variable. 

\begin{lemma}\label{lem:annealed}
Let $Z$ be the partition function of $P\in\M(\Sigma)$ and $\P$ such that \eqref{ass:spatial} and \eqref{ass:inte} hold. Then $Z$ is $\F$-measurable and $\E[Z]<\infty$. In particular $\P$-almost surely $Z<\infty$. Moreover if $Z'$ is the partition function of $P'\in\M(\Sigma)$ then $\E[Z]=\E[Z']$.
\end{lemma}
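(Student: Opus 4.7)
The plan is to verify measurability via Tonelli and then reduce the annealed expectation to $\E[F(\omega,0)]$ by combining the consistency of $F$ with the shift invariance of $\P$; everything else is immediate.

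First I would address measurability. Since $F\colon\Omega\times\Sigma\to\R_+$ is $\F\otimes\G$-measurable and nonnegative, a standard application of the Tonelli theorem to the product measure $\P\otimes P$ shows that the section map $\omega\mapsto \int_\Sigma F(\omega,x)\,P(\dd x)=Z(\omega)$ is $\F$-measurable. No sign issues arise because $F\geq 0$, so integrability need not be established before taking the integral.

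Next I would compute the annealed expectation. By Tonelli again,
\begin{align*}
\E[Z]=\E\!\left[\int_\Sigma F(\omega,x)\,P(\dd x)\right]=\int_\Sigma \E[F(\omega,x)]\,P(\dd x).
\end{align*}
Here I use consistency in the form $F(\omega,x)=F(\omega,0+x)=F(\theta^x(\omega),0)$, which is the $x=0$, $y=x$ case of the relation $F(\omega,x+y)=F(\theta^y(\omega),x)$. Shift invariance (Assumption~\ref{ass:spatial}) then gives that $\theta^x(\omega)$ has the same law as $\omega$ under $\P$, so $\E[F(\theta^x(\omega),0)]=\E[F(\omega,0)]$ for every fixed $x\in\Sigma$. (Measurability of $\omega\mapsto F(\theta^x(\omega),0)$ is guaranteed because $\theta^x$ is an $\F$-measurable bijection.) Since $P(\Sigma)=1$, this collapses the integral to
\begin{align*}
\E[Z]=\E[F(\omega,0)],
\end{align*}
which is finite by Assumption~\ref{ass:inte}.

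The remaining conclusions follow at once: $\E[Z]<\infty$ implies $Z<\infty$ $\P$-a.s.; and for any other $P'\in\M(\Sigma)$ with partition function $Z'$, the same computation yields $\E[Z']=\E[F(\omega,0)]=\E[Z]$, since the right-hand side does not involve $P$. There is no real obstacle here; the only point requiring care is the order of operations in the reduction, namely that one must use consistency to convert the path-dependence of $F$ into a shift on the environment \emph{before} invoking Assumption~\ref{ass:spatial}, and that this shift invariance must be applied pointwise in $x$ under the outer $P$-integral.
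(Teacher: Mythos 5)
Your proof is correct and follows exactly the route the paper indicates (it skips the proof, remarking only that it is ``an easy consequence of Fubini's Theorem and the consistency of $F$''): Tonelli gives measurability and lets you interchange $\E$ and $P$-integration, consistency in the form $F(\omega,x)=F(\theta^x(\omega),0)$ combined with Assumption~\ref{ass:spatial} collapses the inner expectation to $\E[F(\omega,0)]$, and Assumption~\ref{ass:inte} gives finiteness; the independence of $\E[Z]$ from $P$ is then immediate.
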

% \begin{proof}
% Since $F$ is non-negative and $\mathcal F\otimes\mathcal G$-measurable the measurability of $Z$ follows from Fubini. Moreover 
% \begin{align*}
% \E[Z]&=\int_\Sigma\int_\Omega F(\omega,x)\P(\dd\omega) P(\dd x)\\
% &=\int_\Sigma\int_\Omega F(\theta^x(\omega),0)\P(\dd\omega) P(\dd x)=\E[F(\omega,0)].
% \end{align*}
% We have used the consistency of $F$ in the second equality and \eqref{ass:spatial} for the final equality. The final statement follows because the last quantity does not depend on $P$.
% \end{proof}

This is an easy consequence of Fubini's Theorem and the consistency of $F$, so we skip the proof. We have now finished setting up the model. Next we need to introduce an order for probability measures on $\Sigma$ to make the notion ``$X$ has more randomness'' from \eqref{eq:meta} precise. Note that since $\Sigma$ inherits the group structure from $S$ we can define a convolution on $\Sigma$:
\begin{definition}
Let $P,Q\in\M(\Sigma)$. Then $P*Q$ is the law of $X+Y$ where $X$ and $Y$ are independent and have laws $P$ and $Q$.
\end{definition}
This defines an ordering $\preceq_*$ on $\M(\Sigma)$:
\begin{definition}\label{def:part_conv}
For $P^1,P^2\in\mathcal M(\Sigma)$ we write $P^2\preceq_*P^1$ if $P^2=P^1*Q$ for some $Q\in\M(\Sigma)$.
\end{definition}

\begin{example}[Continued]
In the disastrous parabolic Anderson model assumption \eqref{ass:inte} is clear from $F_t\leq 1$, and \eqref{ass:spatial} follows because $\P$ is a homogeneous Poisson point process on $\R_+\times\Z^d$, i.e. spatially invariant. The annealed partition function is indeed independent of $\kappa$:
\begin{align*}
\E[Z^\kappa_t]=\E[F_t(\omega,0)]=\P\big(|\omega\cap([0,t]\times\{0\})|=0\big)=e^{-t}
\end{align*}
Moreover $P^{\kappa}*P^{\kappa'}=P^{\kappa+\kappa'}$ and therefore $P^{\kappa_2}\preceq_*P^{\kappa_1}$ whenever $\kappa_1\leq\kappa_2$.
\end{example}
\begin{remark}
It is somewhat counter-intuitive to use $P^2\preceq_*P^1$ if $P^2$ is more random than $P^1$, but we adopt this notation for consistency with the majorization order $\preceq_M$ that we discuss in Section \ref{sec:outlook}.
\end{remark}

\begin{remark}
 Note that $\preceq_*$ is a reflexive and transitive relation, i.e. it defines a pre-order on $\M(\Sigma)$. If $S$ is commutative we can extend $\preceq_*$ to a partial order by identifying $P^1$ and $P^2$ if there exists a deterministic $z\in\Sigma$ such that $P^1$ is the law of $P^2(\cdot+z)$. Note that in this case $Z^1\isDistr Z^2$ when $Z^1$ resp. $Z^2$ is the partition function of $P^1$ and $P^2$, which follows from \eqref{ass:spatial}
\end{remark}

\section{The main result}\label{sec:main}

\begin{theorem}\label{thm:main}
Let \eqref{ass:spatial} and \eqref{ass:inte} be satisfied and $P^1,P^2\in\M(\Sigma)$ with $Z^1$ resp. $Z^2$ the partition function of $P^1$ resp. $P^2$. Assume that $P^2\preceq_*P^1$. There exists a coupling $(\widehat Z^1,\widehat Z^2)$ such that $\widehat Z^i\isDistr Z^i$ for $i=1,2$, and almost surely
\begin{align}\label{eq:strassen}
\widehat Z^2=\E\big[\widehat Z^1\big|\widehat Z^2\,\big].
\end{align}
\end{theorem}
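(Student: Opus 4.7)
\medskip

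\noindent\textbf{Proof proposal.} The plan is to construct the coupling explicitly from the convolution witness $Q$, and check the martingale identity by Fubini and shift invariance. Recall that the conclusion is just a Strassen--type reformulation of $Z^2\preceq_{cv}Z^1$, so the only freedom I have is to cook up a random variable distributed like $Z^1$ whose conditional mean given the sigma field of $Z^2$ equals $Z^2$.

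First I would exploit the hypothesis $P^2=P^1*Q$. Writing $Z^2$ as a double integral over $P^1\otimes Q$ and using consistency of $F$ to pull the $y$-argument into the environment, Fubini gives the key identity
\begin{align*}
Z^2(\omega)=\int_\Sigma\int_\Sigma F(\omega,x+y)\,P^1(\dd x)\,Q(\dd y)=\int_\Sigma Z^1(\theta^y(\omega))\,Q(\dd y).
\end{align*}
So $Z^2$ is already a $Q$-average of shifted copies of $Z^1$. (Lemma~\ref{lem:annealed} takes care of the measurability and integrability needed to apply Fubini.)

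Next, on an enlargement of $(\Omega,\F,\P)$, I would introduce an auxiliary random path $Y$ with law $Q$, independent of $\omega$, and set
\begin{align*}
\widehat Z^2\coloneqq Z^2(\omega),\qquad \widehat Z^1\coloneqq Z^1(\theta^Y(\omega)).
\end{align*}
That $\widehat Z^2\isDistr Z^2$ is trivial. That $\widehat Z^1\isDistr Z^1$ follows from Assumption~\eqref{ass:spatial}: for each deterministic $y\in\Sigma$ the map $\theta^y$ preserves $\P$, so $Z^1(\theta^y(\omega))\isDistr Z^1(\omega)$, and integrating out the independent $Y$ preserves this distributional equality.

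Finally, for the martingale identity, the boxed display above reads probabilistically as $\E[\widehat Z^1\mid\omega]=\widehat Z^2$ (expectation in $Y$). Since $\widehat Z^2$ is $\sigma(\omega)$-measurable, the tower property yields
\begin{align*}
\E\bigl[\widehat Z^1\big|\widehat Z^2\bigr]=\E\bigl[\,\E[\widehat Z^1\mid\omega]\,\big|\,\widehat Z^2\bigr]=\E\bigl[\widehat Z^2\big|\widehat Z^2\bigr]=\widehat Z^2,
\end{align*}
as required. The only non-cosmetic step is the consistency-plus-Fubini identity; everything else is bookkeeping. I do not expect a genuine obstacle here, since the ``extra randomness'' in $P^2$ encoded by $Q$ is precisely what is being averaged out when we condition on the coarser sigma field $\sigma(\widehat Z^2)\subseteq\sigma(\omega)$.
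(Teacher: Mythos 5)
Your proposal is correct and follows essentially the same route as the paper: sample an independent $Y\sim Q$, set $\widehat Z^1=Z^1(\theta^Y(\omega))$ and $\widehat Z^2=Z^2(\omega)$, use consistency of $F$ plus Fubini to see $\E[\widehat Z^1\mid\omega]=\widehat Z^2$, and finish with the tower property. One small slip in your commentary: the conclusion is the Strassen reformulation of $Z^1\preceq_{cv}Z^2$ (not $Z^2\preceq_{cv}Z^1$), since $\widehat Z^2=\E[\widehat Z^1\mid\widehat Z^2]$ makes $\widehat Z^1$ a mean-preserving spread of $\widehat Z^2$ and hence the smaller one in concave order; this does not affect the validity of your proof.
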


\begin{proof}
From Definition \ref{def:part_conv} we find $Q\in\M(\Sigma)$ such that 
\begin{align}\label{eq:conv}
P^2=P^1*Q.
\end{align}
Let $\omega$ and $Y$ be independent with laws $\P$ and $Q$, define $\widetilde \omega\coloneqq \theta^Y(\omega)$ and 
\begin{align*}
\widehat Z^1&\coloneqq \int_{\Sigma}F(\widetilde\omega,x)P^1(\dd x)\\
\widehat Z^2&\coloneqq \int_{\Sigma\times\Sigma}F(\omega,x+y)P^1(\dd x) Q(\dd y).
\end{align*}
From \eqref{eq:conv} it is clear that $\widehat Z^2\isDistr Z^2$. Moreover $Y$ and $\omega$ are independent so \eqref{ass:spatial} yields $\widetilde \omega\isDistr \omega$. We therefore have $\widehat Z^1\isDistr Z^1$ as well, and setting $\mathcal A\coloneqq \sigma(\omega)$ we compute
\begin{align*}
\E\big[\widehat Z^1\big|\mathcal A\big]&=\int_{\Sigma}\int_{\Sigma}F(\theta^y(\omega),x)P^1(\dd x)Q(\dd y)\\
&=\int_{\Sigma}\int_{\Sigma}F(\omega,x+y)P^1(\dd x)Q(\dd y)=\widehat Z^2.
\end{align*}
We have used the consistency of $F$ in the second equality. Since $\widehat Z^2$ is $\mathcal A$-measurable \eqref{eq:strassen} follows from the tower property of conditional expectation.
\end{proof}

\begin{corollary}
Under the assumptions from Theorem \ref{thm:main} we have $Z^1\preceq_{cv}Z^2$, i.e. for all $f\colon\R_+\to[-\infty,\infty)$ concave 
\begin{align}\label{eq:strassen2}
\E\big[f(Z^1)\big]\leq \E\big[f(Z^2)\big].
\end{align}
\end{corollary}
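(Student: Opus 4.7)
The plan is to derive the corollary directly from Theorem \ref{thm:main} via the conditional form of Jensen's inequality. Strassen's theorem says that concave stochastic order is equivalent to the existence of a coupling $(\widehat Z^1,\widehat Z^2)$ on which $\widehat Z^2$ is the conditional expectation of $\widehat Z^1$ given $\widehat Z^2$; Theorem \ref{thm:main} already supplies such a coupling, so only the easy half of Strassen remains.

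First I would take a concave $f\colon\R_+\to[-\infty,\infty)$ and recall that any such $f$ is Borel measurable, so the random variables $f(\widehat Z^1)$ and $f(\widehat Z^2)$ are well-defined. Since $\widehat Z^i\isDistr Z^i$, we have $\E[f(Z^i)]=\E[f(\widehat Z^i)]$ for $i=1,2$, reducing the question to a statement about the coupled pair. Next I would apply the conditional Jensen inequality to the concave function $f$ and the sigma-field $\sigma(\widehat Z^2)$, giving
\begin{equation*}
\E\big[f(\widehat Z^1)\,\big|\,\widehat Z^2\big]\leq f\big(\E[\widehat Z^1\,|\,\widehat Z^2]\big)=f(\widehat Z^2),
\end{equation*}
where the equality uses \eqref{eq:strassen}. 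Taking expectations and invoking the tower property then yields $\E[f(\widehat Z^1)]\leq\E[f(\widehat Z^2)]$, which translates back to \eqref{eq:strassen2}.

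There is essentially no obstacle here, but two small points deserve attention. First, to apply Jensen's inequality one needs either some integrability of the negative part of $f(\widehat Z^1)$ or to allow the conditional expectation to take the value $-\infty$; since $f$ is allowed to be $-\infty$-valued, the cleanest formulation is via the generalized conditional Jensen inequality for concave functions bounded above on the range of $\widehat Z^1$ (note $\E[\widehat Z^1]=\E[Z^1]<\infty$ by Lemma \ref{lem:annealed}, so the right-hand side $f(\widehat Z^2)$ is finite a.s.\ wherever $\widehat Z^2$ stays in the effective domain of $f$). Second, one should check that $\widehat Z^1\geq 0$ almost surely so that the argument of $f$ lies in its stated domain $\R_+$, but this is immediate from $F\geq 0$.
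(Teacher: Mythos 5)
Your proposal is correct and follows essentially the same route as the paper: apply conditional Jensen's inequality to the coupling from Theorem \ref{thm:main} and conclude via the tower property. The only cosmetic difference is in the integrability bookkeeping---the paper simply splits on whether $\E[f(Z^1)]=-\infty$ (in which case there is nothing to prove) and otherwise works with $f(Z^1)\in L^1$, which is a slightly cleaner way to justify the tower property than your appeal to a ``generalized'' Jensen inequality, and also avoids the imprecision in saying $f$ is ``bounded above on the range of $\widehat Z^1$'' (what is actually used is the linear upper bound $f(x)\leq ax+b$ coming from concavity, together with $\E[\widehat Z^1]<\infty$).
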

\begin{proof}
First note that the expectations on both sides are well-defined in $[-\infty,\infty)$ by the concavity of $f$ and \eqref{ass:inte}. There is nothing to do if the LHS equals $-\infty$, so in the following we  can assume $f(Z^1)\in L^1$. Using the coupling from Theorem \ref{thm:main}
\begin{align*}
\E\big[f(Z^2)\big]=\E\big[f\big(\widehat Z^2\big)\big]&=\E\big[f\big(\E[\widehat Z^1|\widehat Z^2]\big)\big]\\
&\geq \E\big[\E\big[f\big(\widehat Z^1\big)\big|\widehat Z^2\big]\big]=\E\big[f\big(\widehat Z^1\big)\big]=\E\big[f(Z^1)\big].
\end{align*}
The inequality is due to Jensen's inequality for conditional expectations. The second-to-last equality follows from the tower-property for conditional expectations, using the assumption $f(Z^1)\in L^1$.
\end{proof}

\begin{remark}
The concave ordering $Z^1\preceq_{cv}Z^2$ is equivalent to the existence of a coupling satisfying \eqref{eq:strassen}, a result known as Strassen's Theorem for the concave ordering, see \cite[Theorem 1.5.20]{mullerstoyan}. We point out that in contrast to the increasing stochastic order $\preceq_{st}$ the coupling for $\preceq_{cv}$ is typically not explicitly known. (Recall that for the increasing stochastic order on $\R$ the coupling is realized simply by plugging a uniform random variable in the respective quantile functions).
\end{remark}

\section{Applications}\label{sec:applications}

\subsection{The random polymer model}\label{sec:dtds}
We present a model in discrete and discrete space, so let $T\coloneqq \N$ and $S\coloneqq \Z^d$. The environment consists of real random variables associated to each space-time point, so let $\Omega\coloneqq [-1,\infty)^{T\times S}$ and $\P$ such that $\omega$ is i.i.d. satisfying
\begin{align}\label{eq:expmom}
R\coloneqq \E[(1+\omega(0,0))]<\infty.
\end{align}
Let $\Sigma$ denote the set of all paths $\N\to\Z^d$ and for $\omega\in\Omega$, $x\in\Sigma$ define the shifted environment $\theta^x(\omega)$ by
\begin{align*}
(\theta^x(\omega))(t,i)\coloneqq \omega(t,i+x_t).
\end{align*}
That is, $\theta^x$ acts on $\omega$ by shifting the environment in each ``time-slice'' according to the corresponding displacement of $x$. We have to specify a consistent function $F_t\colon\Omega\times\Sigma\to\R_+$. For $\omega\in\Omega$, $x\in\Sigma$ and $t\in\N$ let 
\begin{equation}\label{eq:dtF}
\begin{split}
H_t(\omega,x)&\coloneqq \sum_{s=1}^t\log\big(1+\omega(s,x_s)\big)\1\{\omega(s,x_s)>-1\}\\
G_t(\omega,x)&\coloneqq \1\{\omega(s,x_s)>-1\text{ for all }s=1,\dots,t\}\\
F_t(\omega,x)&\coloneqq e^{H_t(\omega,x)}G_t(\omega,x).
\end{split}
\end{equation}
Note that with the convention ``$e^{\log (0)}=0$'' we could simplify the definition and write
\begin{align*}
F_t(\omega,x)=e^{\sum_{s=1}^t \log(1+\omega(s,x_s))}.
\end{align*}
Also note that $G_t(\omega,x)$ is the event that $x$ avoids all sites where $\omega$ takes the value $-1$, which we can interpret as hard-obstacles for the path. We check that $F$ is consistent: Let $I_{s}(\omega,x)\coloneqq \1\{\omega(s,x_s)>-1\}$ and note that $I_s(\omega,x+y)=I_s(\theta^y(\omega),x)$. Then indeed
\begin{align*}
G_t(\omega,x+y)&=\prod_{s=1}^tI_s(\omega,x+y)=\prod_{s=1}^tI_s(\theta^y(\omega),x)=G_t(\theta^y(\omega),x)\\
H_t(\omega,x+y)&=\sum_{s=1}^t\log\big(1+\omega(s,x_s+y_s)\big)I_s(\omega,x+y)\\
&=\sum_{s=1}^t\log\big(1+(\theta^y(\omega))(s,x)\big)I_s(\theta^y(\omega),x)=H_t(\theta^y(\omega),x)
\end{align*}
Assumption \eqref{ass:spatial} follows because $\P$ is i.i.d. and \eqref{ass:inte} follows from \eqref{eq:expmom}. Note that if $P$ is the law of simple random walk on $\Z^d$ then the partition function of $P$ agrees with the partition function in the random polymer model at temperature one. This model has been studied extensively and we refer to \cite{comets_survey}, \cite{yoshida_survey} and \cite{hollander_survey} for surveys.

\begin{remark}
We point out that the assumptions can be weakened: It is enough if $\P$ is independent in time and stationary in space, i.e. $\omega(t,\cdot)$ and $\omega(s,\cdot )$ are independent for $s\neq t$ and for $t\in\N$, $i\in S$ 
\begin{align*}
\{\omega(t,j)\colon j\in S\}\isDistr \{\omega(t,i+j)\colon j\in S\}.
\end{align*}
We do not expect that the conclusion of Theorem \ref{thm:main} remains valid without independence in time (see Section \ref{sec:static}).
\end{remark}
In discrete time the order $\preceq_*$ is less intuitive than in continuous time. A natural example might be random walks with binomially distributed increments:
\begin{example}
Let $X$ resp. $Y$ be such that $X_{t+1}-X_t\sim\operatorname{Bin}(a_t,p_t)$ and $Y_{t+1}-Y_t\sim\operatorname{Bin}(b_t,p_t)$ for all $t\in\N$, for $a_t,b_t\in\N$ and $p_t\in[0,1]$. Then $Y\preceq_*X$ holds if $a_t\leq b_t$ for all $t\in\N$.
\end{example}

We obtain the following consequence from Theorem \ref{thm:main}:
\begin{corollary}\label{cor:dpre}
Let $P^1,P^2\in\M(\Sigma)$ be such that $P^2\preceq_*P^1$, and let $Z^1_t$ resp. $Z^2_t$ denote the partition functions of $P^1$ resp. $P^2$. Then for all $f$ concave 
\begin{align}\label{eq:dpre_res}
\E\big[f(Z^1_t)\big]\leq \E\big[f(Z^2_t)\big]
\end{align}
\end{corollary}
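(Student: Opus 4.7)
My plan is to reduce the statement to a direct application of the corollary to Theorem \ref{thm:main}. That corollary says: whenever $F$ is consistent, the environment satisfies the shift invariance \eqref{ass:spatial} and the integrability condition \eqref{ass:inte}, and $P^2 \preceq_* P^1$, then $\E[f(Z^1)] \leq \E[f(Z^2)]$ for every concave $f\colon \R_+ \to [-\infty, \infty)$. Hence the task reduces to verifying those three hypotheses for the random polymer setup of Section \ref{sec:dtds}.

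The consistency of $F_t$ is already carried out in the displays immediately preceding the corollary: writing $F_t = e^{H_t} G_t$, consistency of each factor follows from the identity $I_s(\omega, x+y) = I_s(\theta^y(\omega), x)$ for the single-step indicator, which in turn is immediate from the definition of $\theta^x$ acting as a translation within each time-slice. Shift invariance \eqref{ass:spatial} follows from the i.i.d.\ structure of $\omega$, since $\theta^x$ permutes the spatial coordinates slice-by-slice, and the law of an i.i.d.\ field is invariant under any such deterministic permutation. Finally, using $G_t(\omega, 0) \leq 1$ and the convention $e^{\log 0} = 0$, we get
\begin{align*}
\E[F_t(\omega, 0)] \leq \E\Big[\prod_{s=1}^t \bigl(1 + \omega(s, 0)\bigr)\Big] = R^t < \infty
\end{align*}
by independence of $\{\omega(s, 0)\colon s=1,\dots,t\}$ and the moment assumption \eqref{eq:expmom}, which verifies \eqref{ass:inte}.

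With these three ingredients in place, the corollary to Theorem \ref{thm:main} applies verbatim and yields \eqref{eq:dpre_res}. There is essentially no obstacle: the whole point of the abstract framework of Section \ref{sec:setup} is to turn results of this type into bookkeeping. The only genuine modeling choice is to package the hard-obstacle indicator $G_t$ and the log-weight $H_t$ into a single exponential $F_t = e^{H_t} G_t$, which makes the required consistency transparent from the consistency of the single-step indicator.
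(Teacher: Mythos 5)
Your proposal is correct and takes essentially the same route as the paper: verify consistency of $F_t$ (done in the preceding displays), shift invariance from the i.i.d.\ structure, and integrability $\E[F_t(\omega,0)]=R^t<\infty$ from \eqref{eq:expmom}, then invoke the corollary to Theorem \ref{thm:main}. The paper merely states these verifications more tersely, so there is no substantive difference.
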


Note that until now we have not excluded the disastrous case where $\P(\omega(0,0)=-1)>0$, so that potentially $\P(Z_t=0)>0$. For the rest of this section we make the more conventional assumption 
\begin{align}\label{eq:nondeg}
\E\big[e^{|\log(1+\omega(0,0))|}\,\big]<\infty.
\end{align}
In that case it is known \cite[Proposition 1.5]{yoshida_path} that there exists $\lambda_0^i\in\R$ (called the (quenched) free energy) such that almost surely
\begin{align}\label{eq:deffree}
\lambda_0^i=\lim_{t\to\infty}\tfrac 1t\E\big[\log Z^i_t\big]=\lim_{t\to\infty}\tfrac 1t\log Z^i_t
\end{align}
From \eqref{eq:dpre_res} it is immediate that $P^2\preceq_*P^1$ implies
\begin{align}\label{eq:firstcomp}
\lambda_0^1\leq \lambda_0^2.
\end{align}
We obtain a second consequence of Corollary \ref{cor:dpre} by considering the martingales $\{W^1_t=Z^1_te^{-\alpha t}\colon t\in\N\}$ and $\{W^2_t=Z^2_te^{-\alpha t}\colon t\in\N\}$, where 
\begin{align*}
\alpha\coloneqq \log \E\big[1+\omega(0,0)\big].
\end{align*}
Since $W^1$ and $W^2$ are non-negative the almost sure limits $W^1_\infty$ and $W^2_\infty$ exist, and by a standard application of the 0-1 law for $i=1,2$
\begin{align}\label{eq:dicht}
\P(W^i_\infty>0)\in\{0,1\}.
\end{align}
From this it is clear that $W^i$ converges in $L^1$ if and only if $\P(W_\infty^i>0)=1$. We are going to exploit this dichotomy to show that the martingale limits are monotone in the following sense:
\begin{align}\label{eq:dpre_res2}
W^1\text{ converges in }L^1\quad\implies\quad W^2\text{ converges in }L^1.
\end{align}
\begin{proof}[Proof of \eqref{eq:dpre_res2}]
Consider the fractional moments of $W^i$, i.e. the supermartingales
\begin{align*}
\big\{(W^i_t)^{1/2}\colon t\in\N\big\}.
\end{align*}
Since both processes are $L^2$-bounded they are uniformly integrable, so that
\begin{align*}
\E\big[(W^1_\infty)^{1/2}\,\big]=\lim_{t\to\infty}\E\big[(W^1_t)^{1/2}\,\big]\leq \lim_{t\to\infty}\E\big[(W^2_t)^{1/2}\,\big]=\E\big[(W^2_\infty)^{1/2}\,\big].
\end{align*}
The claim follows from Corollary \ref{cor:dpre} together with \eqref{eq:dicht}.
\end{proof}

To the best of our knowledge \eqref{eq:firstcomp} and \eqref{eq:dpre_res2} are the first results in this direction, possibly because in discrete time there is no natural parameter that corresponds to the jump rate in continuous time. At present the only work on stochastic ordering in the context of the random polymer model seems to be \cite{nguyen}. However in that work the underlying random walk is kept fixed and the comparison is between environments at different temperatures.

\subsection{The parabolic Anderson model among \Levy noise}\label{sec:ctds}

We have already discussed a special case of this model as our ongoing example in Sections \ref{sec:intro} and \ref{sec:setup}. Recall that we have considered the case where $\omega(\cdot,i)$ is a Poisson process with jumps of size $-1$. In this section we discuss \eqref{eq:defPAM} for more general environments. 

Let $T=\R^+$, $S=\Z^d$ and $\Sigma$ the set of right-continuous paths having finitely many jumps in every compact interval. Let $\Omega$ be the set of $\omega:T\times S\to \R$ such that for every $i\in S$ the mapping $t\mapsto \omega(t,i)$ is \cadlag with jump sizes bounded from below by $-1$, i.e. such that
\begin{align*}
\omega(i,t)\geq \omega(i,t^-)-1\quad\text{ for all }t\in T.
\end{align*}

Let $\F$ be the smallest sigma-field such that all projections $t\mapsto \omega(t,i)$ are measurable and $\P$ such that $\omega$ is a collection of independent \Levy processes. More precisely let $\{B(\cdot,i)\colon i\in S\}$ be a family of independent standard Brownian motions and $\{\eta(\cdot,i,\cdot)\colon i\in S\}$ an independent family of i.i.d. Poisson point processes on $T\times [-1,\infty)$, and set
\begin{align}\label{eq:defdefomega}
\omega(t,i)\coloneqq -\tfrac {\sigma^2}2t+\sigma B(t,i)+\int_{[0,t]\times[-1,\infty)}r\,\eta(\dd s,i,\dd r).
\end{align}
We assume that the intensity measure $\dd s\rho(\dd r)$ of $\eta(\cdot,i,\cdot)$ has finite mass and satisfies
\begin{align}\label{eq:firstmoment}
R\coloneqq\int_{[-1,\infty)}(1+r)\rho(\dd r)<\infty.
\end{align}
The general form of the parabolic Anderson problem with \Levy noise is given by
\begin{equation}\label{eq:pam}
\begin{split}
u(0,i)&= 1\quad\text{ for all }i\in S\\
\tfrac{\dd}{\dd t}u(t,i)&=(Au)(t,i)+u(t^-,i)\omega(\dd t,i)\quad\text{ for all }t\in T,i\in S,
\end{split}
\end{equation}
where $A$ is bounded and is the generator of a Markov process on $\Z^d$. Note that the Brownian motion only contributes to $\omega$ if $\sigma^2\neq 0$, so we can recover the disastrous case from Section \ref{sec:intro} by setting $\sigma=0$ and choosing $\delta_{-1}$ for the intensity measure $\rho$ of $\eta$.

Let $P$ be the law of the process corresponding to $A$. As a first step we use the so-called Feynman-Kac representation for the solutions of \eqref{eq:pam} to define a function $F_t\colon \Omega\times \Sigma\to\R_+$ such that the partition function of $P$ has the same distribution as $u(t,0)$. For $\omega\in\Omega$, $x\in\Sigma$, $t>0$ let
\begin{align}\label{eq:newdefF}
F_t(\omega,x)\coloneqq e^{H_t(\omega,x)+H'_t(\omega,x)}G_t(\omega,x),
\end{align}
where
\begin{align*}
H_t(\omega,x)\coloneqq &\sum_{i\in S}\int_0^t\1\{x_s=i\}B(\dd s,i)\\
H'_t(\omega,x)\coloneqq &\sum_{i\in S}\int_{[0,t]\times (-1,\infty)}\log(1+r)\1\{x_s=i\}\eta(\dd s,i,\dd r)\\
G_t(\omega,x)\coloneqq &\1\Big\{\sum_{i\in S}\int_{[0,t]\times\{-1\}}\1\{x_s=i\}\eta(\dd s,i,\dd r)=0\Big\}.
\end{align*}
Note that this is slightly different from the definition of $F$ in discrete time in the previous section: In the exponent in \eqref{eq:dtF} we take the sum of the environment $\omega$ along the path $x$, while here we instead integrate the increments of the environment $\omega$ along the path $x$. Let us give some intuition for the dynamics. If the path $x$ occupies site $i$ in $[s_1,s_2]$ we get the following contributions in $F_t(\omega,x)$:
\begin{itemize}[noitemsep]
 \item From the Brownian motion we get $e^{B(s_2,i)-B(s_1,i)}$. To maximize $F_t(\omega,x)$ we therefore need to consider paths $x$ that mostly observe regions where the environment is increasing. 
 \item If $|\eta([s_1,s_2],i,\{-1\})|>0$ then $G_t(\omega,x)=0$ and this path does not contribute to $F_t(\omega,x)$. We think of $(s,-1)\in\eta(\cdot,i,\cdot)$ as a hard obstacle at time $s$ at site $i$ and $G_t$ is the indicator function of the event that $x$ survives.
 \item Similarly we think of $(s,r)\in \eta([s_1,s_2],i,(-1,0))$ as a soft obstacle occupying $i$ at time $s$ where the process survives with probability $(1+r)\in(0,1)$. 
 \item The event $(s,r)\in\eta([s_1,s_2],i,(0,\infty))$ can be interpreted as a bonus occupying site $i$ at time $s$ that will result in a contribution $(1+r)>1$. We can maximize $F_t$ by finding paths $x$ that collect many such bonuses.
\end{itemize}

Finally for $Z_t$ the contribution $F_t(\omega,x)$ is integrated against the entropic cost of the path $x$ under $P$. It was shown that \eqref{eq:pam} has an almost surely unique solution which agrees in distribution with $Z_t$. More precisely let $\overleftarrow \omega$ be the time-reversal of $\omega$, i.e. for $s\in[0,t]$ 
\begin{align*}
% \overleftarrow \omega(s,i)\coloneqq \lim_{s'\downarrow s}\int_{t-s'}^t\omega(\dd s,i).
\overleftarrow \omega(s,i)\coloneqq \omega(t)-\omega((t-s)^-).
\end{align*}
Then \cite[Theorems 2.1 and 2.2]{shiga2} equation \eqref{eq:pam} has a path-wise unique strong solution $u=u(t,i)$ and we have the Feynman-Kac representation
\begin{align*}
Z_t(\overleftarrow\omega)=u(t,0)
\end{align*}
Note that we have added a linear term in \eqref{eq:defdefomega} to simplify the formula for $F$. We finish the definition of the model by specifying shifts $\{\theta^x\colon x\in\Sigma\}$: Similar to the previous section $\theta^x$ acts on $\omega$ by shifting each time-slice according to the displacement of the random walk. More precisely for $t\in T,i\in S$
\begin{align*}
(\theta^x(\omega))(t,i)\coloneqq \sum_{j\in S}\int_0^t\1\{j=i+x_s\}\omega(\dd s,j).
% (\theta^x(\omega))(\dd t,i)\coloneqq \sum_{j\in S}\1\{j=i+x_s\}\omega(\dd t,j).
\end{align*}
We write $\theta^{x}(B)$ resp. $\theta^x(\eta)$ for the Brownian motion resp. the jump part of the shifted environment. 

We check that $H_t$ is consistent: For $\omega\in\Omega$, $t>0$ and $x,y\in\Sigma$
\begin{align*}
H_t(\omega,x+y)=&\sum_{j\in S}\int_0^t \1\{x_s+y_s=j\}B(\dd s,j)\\
=&\sum_{i,j\in S}\int_0^t \1\{x_s=i\}\1\{i+y_s=j\}B(\dd s,j)\\
=&\sum_{i\in S}\int_0^t \1\{x_s=i\}(\theta^y(B))(\dd s,i)=H_t(\theta^y(\omega),x)
% H'_t(\omega,x+y)=&\sum_{i,j\in S}\int_{[0,t]\times(-1,\infty)}\1\{x_s=i,y_s=j\}\log(1+r)\eta(\dd s,i+j,\dd r)\\
% =&\sum_{i\in S}\int_{[0,t]\times(-1,\infty)}\1\{x_s=i\}\log(1+r)(\theta^y(\eta))(\dd s,i,\dd r)=H'_t(\theta^y(\omega),x)\\
% G_t(\omega,x+y)=&\1\Big\{\sum_{i,j\in S}\int_{[0,t]\times\{-1\}}\1\{x_s=i,y_s=j\}\eta(\dd s,i+j,\dd r)=0\Big\}\\
% =&\1\Big\{\sum_{i\in S}\int_{[0,t]\times\{-1\}}\1\{x_s=i\}(\theta^y(\eta))(\dd s,i+j,\dd r)=0\Big\}=G_t(\theta^y(\omega),x).
\end{align*}
A similar calculation for $H_t'$ and $G_t$ shows that $F_t$ is consistent. Moreover for \eqref{ass:inte} we compute using \eqref{eq:firstmoment}
\begin{align*}
\E[F_t(\omega,0)]=\E\big[e^{-\frac{\sigma^2}2t+B(t,0)}\big]\E\big[R^{|\eta([0,t],0,[-1,\infty))|}\big]<\infty.
\end{align*}

Finally since $\omega(\cdot,i)$ has independent increments and since $\P$ is spatially homogeneous it is clear that $(\theta^x(\omega))(\cdot,i)\isDistr \omega(\cdot,i)$ for all $i\in S$. Note that $(\theta^x(\omega))(\cdot,i)$ is a function of the increments of $\omega$ in 
\begin{align*}
g^x(i)\coloneqq \{(t,x_t+i)\colon t\in T\}\subseteq T\times S
\end{align*}
the graph of $x$ shifted by $i$. Clearly $g^x(i)\cap g^x(j)=\emptyset$ for $i\neq j$, and this together with the independent increments of $\omega$ implies that $(\theta^x(\omega))(\cdot,i)$ and $(\theta^x(\omega))(\cdot,j)$ are independent. These considerations show that \eqref{ass:spatial} is satisfied. 

\begin{remark}
Note that in contrast to \cite{shiga2} we have chosen to present the model only in the case where the \Levy measure is finite, i.e. $\omega(\cdot,i)$ has only finitely many discontinuities in every compact interval. This is to keep the notation simple by avoiding the use of the compensated jump measure. Corollary \ref{cor:ctds} below also holds in the general case.
\end{remark}

\begin{remark}
Similar to the situation in discrete time we can consider slightly more general settings: For Corollary \ref{cor:ctds} it is enough to assume that both $\omega$ and $X$ are processes with independent increments, not necessarily stationary in time.
\end{remark}

In the following we focus on the simple random walk case where $A$ is the discrete Laplacian $\kappa\Delta$, which acts on functions $f\colon \Z^d\to\R$ by
\begin{align*}
(\Delta f)(i)=\frac{1}{2d}\sum_{j\colon |i-j|=1}\big(f(j)-f(i)\big).
\end{align*}
Let $P^\kappa$ denote the law of the Markov process corresponding to $\kappa\Delta$ and write $Z^\kappa_t$ for the partition function of $P^\kappa$. In Section \ref{sec:setup} we have already observed that $P^\kappa$ is decreasing in $\preceq_*$, i.e. $\kappa_1\leq\kappa_2$ implies $P^{\kappa_2}\preceq_*P^{\kappa_1}$. Thus from Theorem \ref{thm:main} we obtain the following
\begin{corollary}\label{cor:ctds}
For every $t>0$, $\kappa_1\leq\kappa_2$ and $f$ concave
\begin{align*}
\E[f(Z^{\kappa_1}_t)]\leq \E[f(Z^{\kappa_2}_t)].
\end{align*}
\end{corollary}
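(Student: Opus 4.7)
The plan is simply to invoke the corollary of Theorem \ref{thm:main}. The two inputs to check are: first, that assumptions \eqref{ass:spatial} and \eqref{ass:inte} hold for the present model; and second, that $\kappa_1 \leq \kappa_2$ implies $P^{\kappa_2} \preceq_* P^{\kappa_1}$. The first point has already been verified in the preceding paragraphs of Section \ref{sec:ctds}, so only the $\preceq_*$ comparison needs argument.

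For that comparison, I would rely on the convolution property of continuous-time simple random walks alluded to at the end of Section \ref{sec:intro}. Concretely, let $X$ and $Y$ be independent simple random walks on $\Z^d$ started at $0$ with rates $\kappa_1$ and $\kappa_2 - \kappa_1$. Their jump times form independent Poisson processes of those rates, each jump choosing a nearest-neighbor direction uniformly and independently. By the superposition theorem, the combined jump times form a Poisson process of rate $\kappa_2$, and at every arrival the path $X + Y$ moves uniformly to a nearest neighbor. Hence $X + Y$ has law $P^{\kappa_2}$, i.e.\ $P^{\kappa_1} * P^{\kappa_2 - \kappa_1} = P^{\kappa_2}$. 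Setting $Q = P^{\kappa_2 - \kappa_1}$ in Definition \ref{def:part_conv} yields $P^{\kappa_2} \preceq_* P^{\kappa_1}$.

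With both inputs in hand, the corollary of Theorem \ref{thm:main} produces $Z^{\kappa_1} \preceq_{cv} Z^{\kappa_2}$, which is the claim. There is essentially no obstacle: the substantive work has been front-loaded into Theorem \ref{thm:main} and into the verification of the framework for the \Levy parabolic Anderson model earlier in Section \ref{sec:ctds}. The one ingredient peculiar to this application is the convolution identity $P^{\kappa} * P^{\kappa'} = P^{\kappa + \kappa'}$, which is the natural property distinguishing continuous time from discrete time, where (as noted in Section \ref{sec:dtds}) no analogous one-parameter monotone family arises automatically.
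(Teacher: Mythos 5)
Your proposal matches the paper's argument exactly: verify \eqref{ass:spatial} and \eqref{ass:inte} (done in the preceding paragraphs of Section \ref{sec:ctds}), observe $P^{\kappa_1}*P^{\kappa_2-\kappa_1}=P^{\kappa_2}$ so that $P^{\kappa_2}\preceq_*P^{\kappa_1}$, and invoke the corollary to Theorem \ref{thm:main}. The only difference is that you spell out the Poisson superposition argument behind the convolution identity, which the paper simply cites as known.
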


We discuss some consequences: It is known \cite{shiga}, \cite{shiga2}, \cite{cranston} that there exists a value $\lambda_0(\kappa)\in\R$ (called the quenched Lyapunov exponent) such that almost surely and in $L^1$
\begin{align}\label{eq:deflambda}
\lim_{t\to\infty}\tfrac 1t\E[\log Z_t(\omega)]=\lim_{t\to\infty}\tfrac 1t\log Z_t=\lambda_0(\kappa).
\end{align}
We point out that in contrast to the discrete-time model in the previous section the limit is defined also in the hard obstacle case $\rho(\{-1\})>0$. We also consider the following quantity, known as the $r^{th}$-annealed Lyapunov exponent:
\begin{align}\label{eq:deflambdap}
\lim_{t\to\infty}\tfrac 1t\log \E\big[(Z_t^\kappa)^r\big]^{\frac 1r}=:\lambda_r(\kappa).
\end{align}
Existence of this limit follows from the sub-/superadditive Lemma. As a direct consequence of Corollary \ref{cor:ctds} 
\begin{align}\label{eq:consequence}
\kappa\mapsto \lambda_r(\kappa)\quad\left\{\begin{matrix}\text{ is increasing if }r< 1\\\text{ is decreasing if }r>1\end{matrix}\right..
\end{align}
Recall from Lemma \ref{lem:annealed} that $\kappa\mapsto\lambda_1(\kappa)$ is constant. Moreover as in the previous section one can consider the martingales $W^\kappa_t\coloneqq  Z_t^\kappa e^{-\alpha t}$ where $\alpha=\log\E[Z_1^\kappa]$, and the same argument as before shows that for $\kappa_1\leq\kappa_2$ 
\begin{align*}
W^{\kappa_1}_t\text{ converges in }L^1\quad\implies\quad W^{\kappa_2}_t\text{ converges in }L^1.
\end{align*}

\subsection{Poissonian environments in continuous space}\label{sec:ctcs}

In this section we discuss the continuous-space version of the previous model. Recall that a component of the environment we considered an i.i.d. collection $\{B(\cdot,i)\colon i\in\Z^d\}$ of Brownian motions indexed by the sites of $\Z^d$. The natural generalization to continuous space is to consider space-time white noise, which however introduces considerable technical difficulties. In the following we will therefore restrict ourselves to Poissonian environments. We refer to \cite{carmona2} for a discussion of space-time white noise environments. 

In the Poissonian case we can encode the environment by a countable collection of triples $(s,i,r)\in\R_+\times\R^d\times[-1,\infty)$, which we interpret as an obstacle/bonus of strength $1+r$ that is present at time $s$ at site $i$. 

More precisely let $T=\R^+$, $S=\R^d$ and $\Omega$ the set of locally finite point measures on $\R_+\times\R^d\times [-1,\infty)$. As before we will identify an element $\omega\in\Omega$ with its support, i.e. we regard $\omega\subseteq\R_+\times\R^d\times[-1,\infty)$. Let $\P$ be the law of a Poisson point process on $\Omega$ with intensity measure $\dd s\,\dd x\,\rho(\dd r)$ where $\rho$ has finite mass and satisfies
\begin{align}\label{eq:finfinitemom}
R:=\int_{[-1,\infty)} (1+r)\rho(\dd r)<\infty.
\end{align}

Similar to the previous section we set
\begin{align*}
H_t(\omega,x)&\coloneqq \int_{[0,t]\times\R^d\times (-1,\infty)} \1\{i\in B_1(x_s)\}\log(1+r) \omega(\dd s,\dd i,\dd r)\\
G_t(\omega,x)&\coloneqq \1\Big\{\int_{[0,t]\times\R^d\times \{-1\}}\1\{i\in B_1(x_s)\}\omega(\dd s,\dd i,\dd r)=0\Big\}\\
F_t(\omega,x)&\coloneqq e^{H_t(\omega,x)}G_t(\omega,x).
\end{align*}
Here $B_1(x)$ denotes the ball of radius $1$ around $x\in\R^d$. The interpretation of $F_t$ is similar to before: If $(s,i,-1)\in\omega$ then we interpret this as a hard obstacle at time $s$ at site $i$, and the process is killed if it is within distance $1$ of $i$ at time $s$. The event $(s,i,r)\in\omega$ can be interpreted as a soft obstacle (for $r\in(-1,0)$) resp. a bonus (for $r>0$) occupying $i\in\R^d$ at time $s$, which the process should avoid resp. should try to collect. 

We define the shifted environment $\theta^x(\omega)$ by the relation
\begin{align*}
(s,i,r)\in\omega\quad\iff\quad (s,i-x_s,r)\in \theta^x(\omega).
\end{align*}
The proof that $F_t$ is consistent is similar to the example in Section \ref{sec:setup}. Moreover \eqref{ass:spatial} directly follows from the spatial homogeneity of $\P$ and \eqref{ass:inte} can be check in the same way as in Section \ref{sec:ctds}.
%, and to see \eqref{ass:inte} we compute
%\begin{align*}
%\E\big[e^{H_t(\omega,0)}\big]&=\E\Big[R^{\,\#\omega([0,t]\times B_1(0)\times [-1,\infty))}\Big]
%&\leq \int\1\{s\in[0,t]\}\1\{r>0\}\1\{\|z\|_1\leq 1\}(1+r)\dd s\dd z\rho(\dd r)\\&=Ct\int_0^\infty(1+r)\rho(\dd r)
%<\infty.
%\end{align*}
%Here $\#\omega([0,t]\times B_1(0)\times[-1,\infty))$ counts the number of disasters in $[0,t]\times B_1(0)\subseteq T\times S$. 
\begin{remark}
Again we point out that the assumptions can be relaxed, since it is enough that both environment and random walk have independent increments and are stationary in space.
\end{remark}

Let now $P^{\sigma^2}$ be the law of Brownian motion with variance $\sigma^2$ and write $Z^{\sigma^2}_t$ for the partition function of $P^{\sigma^2}$. It is well-known that  $P^{\sigma_1^2}*P^{\sigma_2^2}=P^{\sigma_1^2+\sigma_2^2}$, so that $\sigma^2\mapsto P^{\sigma^2}$ is decreasing in $\preceq_*$. Applying Theorem \ref{thm:main} we therefore get the following
\begin{corollary}\label{cor:ctcs}
For $t>0$, $\sigma^2_1\leq\sigma^2_2$ and $f$ concave
\begin{align*}
\E\big[f\big(Z^{\sigma^2_1}_t\big)\big]\leq \E\big[f\big(Z^{\sigma^2_2}_t\big)\big].
\end{align*}
\end{corollary}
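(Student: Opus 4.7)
The plan is to show that this corollary is an immediate consequence of the concave ordering obtained from Theorem \ref{thm:main} and its following corollary, once the model has been fit into the framework. First I would confirm that the model in Section \ref{sec:ctcs} meets the hypotheses: consistency of $F_t$ is checked just as in the example of Section \ref{sec:setup}; \eqref{ass:spatial} follows from the translation invariance of the intensity measure of the Poisson point process in the spatial variable; and \eqref{ass:inte} follows from the finite-intensity assumption on $\rho$ together with \eqref{eq:finfinitemom}, via the same computation used in Section \ref{sec:ctds} (the expected value of $F_t(\omega,0)$ factorises as $\E[R^N]$ for a Poisson-distributed $N$).

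The key step is then to exhibit the convolution relation $P^{\sigma_1^2}\ast P^{\sigma_2^2}=P^{\sigma_1^2+\sigma_2^2}$ on $\M(\Sigma)$. This follows path-wise: if $X^1$ and $X^2$ are independent samples from $P^{\sigma_1^2}$ and $P^{\sigma_2^2}$, then $X^1+X^2$ is a continuous process started at $0$ with independent, stationary, centred Gaussian increments of variance $(\sigma_1^2+\sigma_2^2)t$, hence a Brownian motion with variance parameter $\sigma_1^2+\sigma_2^2$ by Lévy's characterisation. In particular, for $\sigma_1^2\leq\sigma_2^2$ the measure $Q\coloneqq P^{\sigma_2^2-\sigma_1^2}$ satisfies $P^{\sigma_2^2}=P^{\sigma_1^2}\ast Q$, which is exactly $P^{\sigma_2^2}\preceq_\ast P^{\sigma_1^2}$ in the sense of Definition \ref{def:part_conv}.

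Finally I would invoke the corollary following Theorem \ref{thm:main}: from $P^{\sigma_2^2}\preceq_\ast P^{\sigma_1^2}$ we obtain $Z^{\sigma_1^2}_t\preceq_{cv}Z^{\sigma_2^2}_t$, which is precisely the stated inequality for every concave $f\colon\R_+\to[-\infty,\infty)$.

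There is no real obstacle here; the entire difficulty has been absorbed into Theorem \ref{thm:main} and into the abstract framework of Section \ref{sec:setup}, and the work is reduced to matching notation. The only point deserving a moment of attention is that the ordering $\preceq_\ast$ must be verified on the level of full path laws (not just time-$t$ marginals), which is immediate from the path-wise additivity of Brownian motion described above.
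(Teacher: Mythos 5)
Your proposal is correct and follows essentially the same route as the paper: verify that the Poissonian continuous-space model satisfies consistency and Assumptions~\ref{ass:spatial}, \ref{ass:inte}, observe the convolution identity $P^{\sigma_1^2}*P^{\sigma_2^2}=P^{\sigma_1^2+\sigma_2^2}$ (so $\sigma^2\mapsto P^{\sigma^2}$ is decreasing in $\preceq_*$), and then apply Theorem~\ref{thm:main} together with its corollary. The only minor difference is that you spell out the convolution step via Lévy's characterisation, which the paper simply records as well-known.
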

Let us point out that $Z^{\sigma^2}$ has an integrability issue if $\rho(\{-1\})>0$ since in this case $\E[\log Z^{\sigma^2}_t]=-\infty$ for all $t>0$ (see \cite[Proposition 1.2]{fukushima}). We therefore first consider the case $\rho=\delta_{\alpha}$ for $\alpha>-1$, and we write $\P^\alpha$ for the law of this environment. In \cite[Theorem 2.2.1]{comets2004some} it was shown that there exists $\lambda_0(\alpha, \sigma^2)\in(-\infty,\infty)$ such that $\P^\alpha$-almost surely
\begin{align}\label{eq:defbrownianfree}
\lim_{t\to\infty}\frac 1t\E^\alpha\big[\log Z_t^{\sigma^2}\big]=\lim_{t\to\infty}\frac 1t\log Z_t^{\sigma^2}=\lambda_0(\alpha, \sigma^2).
\end{align}
From Corollary \ref{cor:ctcs} for every $\alpha>-1$
\begin{align}
\sigma\mapsto \lambda_0(\alpha,\sigma^2)\quad\text{ is increasing.}
\end{align}

The conclusion is also valid in a disastrous environment $\P^{-1}$: For this we consider a truncated version $F^1_t$ of $F_t$ where $G_t$ has been replaced by 
\begin{align*}
G_t^1(\omega,x)\coloneqq \1\Big\{\int_{[1,t]\times \R^d\times\{-1\}}\1\{i\in B_1(x_s)\}\omega(\dd s,\dd i,\dd r)=0\Big\}.
\end{align*}
In words, $G_t^1$ is the indicator of the event that the Brownian motion avoids all hard obstacles $(s,i)$ with $s\geq 1$.
% Intuitively the lack of integrability in $\log Z^{\sigma^2}_t$ is caused by environments where a disasters occurs close to the origin and close to time $0$. This is because the Brownian starts in the origin and in order to avoid such a disaster it has to move a non-trivial distance in very short time. 
It has been shown \cite[Theorem 1.3]{fukushima} that this truncation solves the integrability issue but does not affect the almost sure limit in \eqref{eq:defbrownianfree}, i.e. there exists $\lambda_0(-1,\sigma^2)\in\R$ such that $\P^{-1}$-almost surely
\begin{align*}
\lim_{t\to\infty}\frac 1t\E^{-1}\big[\log Z_t^{\sigma^2,1}\big]=\lim_{t\to\infty}\frac 1t\log Z_t^{\sigma^2}=p_0(-1, \sigma^2).
\end{align*}
The same calculation as before shows that $F_t^1$ defines an integrable, consistent function so that $\sigma\mapsto p(-1,\sigma^2)$ is increasing by Corollary \ref{cor:ctcs}.

\subsection{Branching random walks in discrete time}\label{sec:dt_branching}

The random polymer model and its continuous-time relatives have a natural connection to branching processes in a space-time random environment. In this section we show that Theorem \ref{thm:main} can be applied to prove a phase transition for this model, see Corollary \ref{cor:dt_br}. Let $T=\N$, $S=\Z^d$ and $\Sigma$ the set of paths $x\colon T\to S$. Moreover let $\Omega$ be the set of $\eta=\{\eta(t,i)\colon t\in T,i\in S\}$ such that $\eta(t,i)\in\M(\N)$ for every $t\in T,i\in S$. For $\eta\in\Omega$ we define $\omega=\omega(\eta)\in\R^{T\times S}$ by the relation
\begin{align}\label{eq:defomega}
\omega(t,i)\coloneqq \sum_{k\in\N} k\eta(t,i)(k)-1.
\end{align}
In words, an element $\eta\in\Omega$ defines an offspring distribution for every space-time site and $\omega(\eta)+1$ is the expected number of descendants. We assume 
\begin{align*}
-\infty <\E\big[\log (1+\omega(0,0))\big]\leq \log \E\big[(1+\omega(0,0))\big]&<\infty
\end{align*}

Now let $p\in\M(S)$ be an increment distribution and let $(\{X(t)\colon t\in\N\},P^p)$ denote the corresponding random walk. That is, $\{X(t+1)-X(t)\colon t\in\N\}$ is i.i.d. with distribution $p$ under $P^p$. It is clear that $p\mapsto P^p$ is increasing in $\preceq_*$, i.e. 
\begin{align*}
p\preceq_*q\quad\implies\quad P^p\preceq_*P^q.
\end{align*}
Given $\eta\in\Omega$ we consider a branching process $A=\{A(t,i)\colon t\in T,i\in S\}$ with values in $\N^{T\times S}$ that is informally defined in the following way:
\begin{itemize}[noitemsep]
 \item At time $t=0$ there is only one particle at the origin, i.e. $A(0,i)=\1\{i=0\}$.
 \item Suppose the process has been defined until time time $t$ and consider a particle that in generation $t$ occupies site $i$:
 \begin{itemize}[noitemsep]
  \item For generation $t+1$ this particle is replaced by a random number of descendants which are sampled according to $\eta(t,i)$. 
  \item Each descendant then independently moves from $i$ to a random location $i+D$ where the displacement $D$ has distribution $p$. 
 \end{itemize}
 \item This procedure is applied independently for each particle in generation $t$, and we let $A(t+1,i)$ denote the number of particles that occupy $i$ in generation $t+1$.
\end{itemize}
We use $P^p_\eta$ to denote the law of $A$ for a fixed realization of $\eta$, and $\P^p$ for the joint law of $\eta$ and $A$. Let $\{A\text{ survives}\}$ denote the event that for every $t\in\N$ there is $i\in S$ such that $A(t,i)>0$. In this section we discuss how the survival probability depends on the displacement $p$.

For this we let $\widetilde F_t\colon [-1,\infty)^{S\times T}\times \Sigma\to\R_+$ denote the function defined in \eqref{eq:dtF} which we extend to a function $F_t\colon \Omega\times \Sigma$ by $F_t(\eta,x)\coloneqq \widetilde F_t(\omega(\eta),x)$. Let $Z^p_t$ denote the partition function of $P^p$ associated with $F_t$. It was shown (\cite[Lemma 1.4]{yoshida_CLT}) that for every $\eta\in\Omega$
\begin{align}\label{eq:many}
E^p_\eta\Big[\sum_{i\in S}A(t,i)\Big]=Z^p_t(\eta).
\end{align}
Note that on the LHS we have the expected number of particles in the branching random walk in environment $\eta$, while the RHS is the partition function of a random walk in the environment $\omega(\eta)$ induced by $\eta$. To analyse this, recall that almost surely for $t\to\infty$ 
\begin{align*}
Z^p_t\approx e^{t \lambda_0(p)}
\end{align*}
where $\lambda_0(p)=\lim_{t\to\infty}\tfrac 1t\log Z^p_t$ is the free energy associated to $Z_t^p$, see \eqref{eq:deffree}. It is thus intuitively clear that we can expect very different behavior depending on the sign of $\lambda_0(p)$. It turns out that $\lambda_0(p)$ is sufficient to characterize the survival of $A$. More precisely it was shown (\cite[Theorem 2.1.1]{yoshida} and \cite[Theorem 1]{garetmarchand}) that
\begin{align}\label{eq:char}
\P^p(A\text{ survives})>0\quad\iff \quad \lambda_0(p)>0.
\end{align}
From \eqref{eq:firstcomp} we get the following 
\begin{corollary}\label{cor:dt_br}
Assume $p\preceq_*q$ and $\P^q(A\text{ survives})>0$. Then \\$P^p(A\text{ survives})>0$.
\end{corollary}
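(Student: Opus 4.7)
The plan is to deduce the corollary in three short lines as a direct consequence of the free-energy comparison \eqref{eq:firstcomp}, combined with the characterization \eqref{eq:char} of survival in terms of the sign of $\lambda_0$. No hard analytic work is required beyond checking that the hypotheses of Theorem \ref{thm:main} (equivalently, of Corollary \ref{cor:dpre}) apply in the present branching setting.

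First I would invoke the fact, already recorded in the text just before the corollary, that the map $\M(S) \ni p \mapsto P^p \in \M(\Sigma)$ is monotone for $\preceq_*$: if $p = q * r$ for some $r \in \M(S)$, then the i.i.d.\ $p$-increment random walk agrees in law with the coordinate-wise sum of independent $q$- and $r$-increment random walks, so $P^p = P^q * P^r$, i.e.\ $P^p \preceq_* P^q$. Applying the consequence \eqref{eq:firstcomp} of Corollary \ref{cor:dpre} with $P^1 = P^q$ and $P^2 = P^p$ then yields $\lambda_0(q) \leq \lambda_0(p)$, where $\lambda_0(p)$ is the quenched free energy of the polymer partition function $Z^p_t(\eta)$ which, via \eqref{eq:many}, records the expected total population of the branching walk in environment $\eta$.

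Finally, by \eqref{eq:char} the hypothesis $\P^q(A \text{ survives}) > 0$ is equivalent to $\lambda_0(q) > 0$; combined with the previous step this forces $\lambda_0(p) \geq \lambda_0(q) > 0$, and a second application of \eqref{eq:char} yields $\P^p(A \text{ survives}) > 0$. The only point worth spelling out in the write-up is that the standing assumption $-\infty < \E[\log(1+\omega(0,0))] \leq \log \E[1+\omega(0,0)] < \infty$ of Section \ref{sec:dt_branching} suffices both to make $\lambda_0(p) \in \R$ well-defined in the sense of \eqref{eq:deffree} and to license the use of Corollary \ref{cor:dpre} even in the presence of soft obstacles $\omega(t,i) \in (-1, 0)$; once that bookkeeping is done, the proof is essentially the single implication $\lambda_0(q) > 0 \Rightarrow \lambda_0(p) > 0$, and there is no substantive obstacle to overcome.
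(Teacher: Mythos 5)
Your proposal is correct and is exactly the paper's argument: the paper states the corollary immediately after \eqref{eq:char} with the words ``From \eqref{eq:firstcomp} we get the following,'' and the intended reasoning is precisely $p\preceq_*q \Rightarrow P^p\preceq_*P^q \Rightarrow \lambda_0(q)\leq\lambda_0(p)$, combined with the equivalence \eqref{eq:char} applied at $q$ and then at $p$. Your index bookkeeping (taking $P^1=P^q$, $P^2=P^p$ in the notation of \eqref{eq:firstcomp}) is the right way to line up the inequality, and your remark about the integrability hypotheses is a reasonable point of care but does not change the substance.
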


\begin{remark}
Note that we cannot expect the stronger conclusion 
\begin{align*}
``\P^q(A\text{ survives})\leq \P^p(A\text{ survives})``.
\end{align*}
Consider two classical Galton-Watson processes $(A_n)_{n\in\N}$ and $(A'_n)_{n\in\N}$, each with constant deterministic offspring distribution. The comparison \eqref{eq:firstcomp} is then simply a first moment comparison $\E[A_1]\leq\E[A_1']$. It is well-known that $A$ (resp. $A'$) has positive survival probability if and only if $\E[A_1]>0$ (resp. $\E[A_1']>0$) so we can conclude that $\P(A\text{ survives})>0$ implies $\P(A'\text{ survives})>0$. There are, however, examples where $\E[A_1]<\E[A'_1]$ but $\P(A\text{ survives})>\P(A'\text{ survives})$. 
\end{remark}

\subsection{Branching random walks among disasters}\label{sec:ct_branching}

In this setting we discuss a continuous time-version of a branching random walk in space-time random environment. We choose the same environment as in the disastrous parabolic Anderson model from Section \ref{sec:setup}, i.e. we let $\Omega$ denote the set of locally finite point measures on $\R_+\times\Z^d$. As before we identify $\omega\in\Omega$ with its support and interpret $(t,i)\in\omega$ as a disaster present at time $t$ at site $i$. We consider a process $A=\{A(t,i)\colon t\in\R_+,i\in\Z^d\}$ that evolves according to the following rules:
\begin{itemize}[noitemsep]
 \item Initially we have one particle occupying the origin, $A(0,i)=\1\{i=0\}$.
 \item Each particle independently moves as a simple random walk with jump rate $\kappa>0$.
 \item Each particle independently branches at rate $\lambda>0$, i.e. at rate $\lambda>0$ the particle dies and is replaced by two descendants at the same site.
 \item A disaster $(t,i)\in\omega$ kills all particles that occupy site $i$ at time $t$.
\end{itemize}
This model has the jump rate $\kappa$ and the branching rate $\lambda$ as parameters, and we write $\P^{\kappa,\lambda}$ for the joint law of environment $\omega$ and branching random walk. As before $P^\kappa$ denotes the law of simple random walk with jump rate $\kappa$, and $Z^\kappa_t$ is the partition function of $P^\kappa$ corresponding to $F_t$ from \eqref{eq:defFdis}. Due to \cite[Lemma 1]{self} the following many-to-one formula holds for each $\omega\in\Omega$
\begin{align*}
E^\kappa_\omega\Big[\sum_{i\in S}A(t,i)\Big]=e^{\lambda t}Z^\kappa_t(\omega).
\end{align*}
Recall from \eqref{eq:deflambda} that $Z^\kappa_t$ has a deterministic exponential decay rate $\lambda_0(\kappa)$, so similar to the previous section we expect different behavior depending on the sign of $\lambda_0(\kappa)+\lambda$. Indeed it was shown \cite[Theorem 1.1]{self} that
\begin{align*}
\P^{\kappa,\lambda}(A\text{ survives})>0\quad\iff \quad \lambda+\lambda_0(\kappa)>0.
\end{align*}
In Section \ref{sec:ctds} we have seen that $\kappa\mapsto \lambda_0(\kappa)$ is increasing:
\begin{corollary}
Assume that $\P^{\kappa_1,\lambda}(A\text{ survives})>0$ and $\kappa_1\leq\kappa_2$. Then $\P^{\kappa_2,\lambda}(A\text{ survives})>0$.
\end{corollary}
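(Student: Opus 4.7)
The plan is to reduce the corollary to the survival characterization
\[
\P^{\kappa,\lambda}(A\text{ survives})>0\quad\iff\quad \lambda+\lambda_0(\kappa)>0
\]
recalled immediately above the statement. Given this equivalence, the hypothesis yields $\lambda+\lambda_0(\kappa_1)>0$, so if one knows the monotonicity $\lambda_0(\kappa_1)\leq\lambda_0(\kappa_2)$, then $\lambda+\lambda_0(\kappa_2)\geq \lambda+\lambda_0(\kappa_1)>0$, and the same equivalence applied at $\kappa_2$ delivers the desired conclusion.

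Thus the entire content of the proof lies in establishing the monotonicity $\kappa\mapsto \lambda_0(\kappa)$, which I would extract directly from Corollary \ref{cor:ctds}. Taking the concave function $f=\log\colon\R_+\to[-\infty,\infty)$, the corollary at fixed $t>0$ yields
\[
\E\big[\log Z^{\kappa_1}_t\big]\leq \E\big[\log Z^{\kappa_2}_t\big].
\]
Dividing by $t$ and letting $t\to\infty$, the left-hand side converges to $\lambda_0(\kappa_1)$ and the right-hand side to $\lambda_0(\kappa_2)$ by \eqref{eq:deflambda}, giving $\lambda_0(\kappa_1)\leq \lambda_0(\kappa_2)$.

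There is no substantive obstacle here: the argument is a two-step combination of the external survival criterion and the ordering already developed in Section \ref{sec:ctds}. The only minor points worth a sentence of comment are that $\log$ is admissible as a test function in Corollary \ref{cor:ctds} (concave, real-valued off the origin, taking the value $-\infty$ at $0$), and that the expectations $\E[\log Z^{\kappa_i}_t]$ are finite for large $t$, which is precisely ensured by the existence of the limit in \eqref{eq:deflambda}.
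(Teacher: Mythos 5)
Your proof is correct and follows the route the paper intends: combine the survival characterization $\P^{\kappa,\lambda}(A\text{ survives})>0\iff\lambda+\lambda_0(\kappa)>0$ with the monotonicity of $\kappa\mapsto\lambda_0(\kappa)$, the latter being the $f=\log$ instance of Corollary~\ref{cor:ctds} applied to \eqref{eq:deflambda} (exactly the argument underlying \eqref{eq:firstcomp} in the discrete-time case). The paper leaves this monotonicity implicit in the phrase preceding the corollary; you have simply written out the one-line derivation.
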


\section{Outlook}\label{sec:outlook}

In this section we discuss two possible ways in which Theorem \ref{thm:main} might be generalized: 
\begin{enumerate}[noitemsep]
 \item[(a)] One can try to weaken the assumption that the environment has independent increments (which is implicit in \eqref{ass:spatial}). This is particularly interesting for the parabolic Anderson model which is often studied in the case where the environment has long-time correlations.
 \item[(b)] One can also try to weaken the relation $\preceq_*$, so that in Theorem \ref{thm:main} we obtain $Z^1\preceq_{cv}Z^2$ for more partition functions. Here it is particularly interesting to consider the discrete-time setting where $\preceq_*$ is a rather unnatural condition.
\end{enumerate}
We will keep the discussion of (a) short since in general one cannot expect the conclusion of Theorem \ref{thm:main} to hold in this case (Section \ref{sec:static}). For (b) we recall the notion $\preceq_M$ of majorization which we show is a natural candidate for this extension. We conjecture that under some additional assumption it is the optimal condition for Theorem \ref{thm:main} (Section \ref{sec:conj}). As evidence for this conjecture we then prove that $\preceq_M$ really is optimal if we consider random walks on trees instead of on the lattice (Section \ref{sec:trees}).

\subsection{Environments with long-time correlations}\label{sec:static}

We again consider the parabolic Anderson model 
\begin{align*}
u(0,i)&=0\quad\text{ for all }i\in\Z^d\\
\tfrac{\dd }{\dd t}u(t,i)&=\kappa\,(\Delta u)(t,i)+u(t,i)\omega(\dd t,i)\quad\text{ for all }t\in\R_+,i\in\Z^d
\end{align*}
with environment $\omega=\{\omega(t,i)\colon t\in\R_+,i\in\Z^d\}$, but this time we do not assume that $\omega$ has independent increments. Examples include:
\begin{enumerate}[noitemsep]
 \item Static environment: Let $\{\xi(i)\colon i\in\Z^d\}$ be i.i.d. real random variables satisfying some integrability conditions, and set $\omega(t,i)\coloneqq\xi(i)t$ for $t\in\R_+, i\in\Z^d$. 
 \item Independent simple random walks: We consider a field $\{\eta(t,i)\colon t\in\R_+,i\in\Z^d\}$ where $\eta(t,i)$ counts the number of particles occupying site $i$. Assume that the initial numbers $\eta(0,i)$ are independent and Poisson distributed, and that afterwards all particle independently move as simple random walks. We set $\omega(\dd t,i)\coloneqq\beta \eta(t,i)\dd t$ with $\beta$ some (positive or negative) parameter.
 \item Simple exclusion process/Voter model: We consider a field $\{\eta(t,i)\colon t\in\R_+,i\in\Z^d\}$ with values in $\{0,1\}$ where $\eta(0,\cdot)$ is sampled according to a product measure of Bernoulli distributions and where the field afterwards evolves according to the simple exclusion process/the voter model. We again define $\omega(\dd t,i)\coloneqq \beta \eta(t,i)\dd t$ for $\beta$ some (positive or negative) parameter.
\end{enumerate}
Note that we have only presented these examples in an informal way, and we refer to \cite{gaertnerquenched}, \cite{erhardparabolic}, \cite{drewitzsurvival} as well as the survey \cite{koenigparabolic} for a precise definition and an overview of known results. We point out that in these models it is already an interesting problem to study the annealed partition function $\E[Z_t^\kappa]$ while in our setup the annealed partition function does not depend on $\kappa$ (recall Lemma \ref{lem:annealed}). 

Intuitively for these models we can expect that the environment will exhibit islands where it takes significantly larger values than the average environment and which are persistent over a long time. Let us for example consider a static environment taking two values $a<b\in\R$, i.e. with $\P(\xi(0)=a)=1-\P(\xi(0)=b)=p\in(0,1)$. Then (in dimension one) we can expect that among the sites $[-\sqrt t,\sqrt t]\cap\Z$ there exists some interval $I$ whose length is of order $\log (t)$ with $\xi(i)=b$ for all $i\in I$. Now both
\begin{itemize}[noitemsep]
 \item the probability of moving from the origin to the center of $I$ before time $\eps t$ and 
 \item the probability of not leaving $I$ in $[\eps t,t]$ 
\end{itemize}
are sub-exponential. We therefore expect that $u(t,0)\approx e^{bt}$ for large $t$. Note that this corresponds to a qualitatively different strategy than before: The optimal strategy is to immediately move to an area where environment is good and then never again leave this area. It is intuitively clear that for the second part of the strategy above a small jump rate $\kappa$ is better, and we expect $Z^\kappa_t$ to be (in some sense) decreasing in $\kappa$, at least for $t$ large. 

We expect the same phenomenon for the other models: For example consider an environment consisting of independent random walks which have repulsive interaction with the random walk (Example 2 with $\beta<0$). This is the model considered in \cite{drewitzsurvival}, where it was shown \cite[Proposition 2.1]{drewitzsurvival} that 
\begin{align*}
\E[Z^\kappa_t]\leq \E[Z^0_t]\quad \text{ for all }\kappa>0.
\end{align*}
In the physics literature this phenomenon is known under the name ``Pascal's principle'', see the discussion in \cite{drewitzsurvival}. On the heuristic level we think that the concave order is not the correct criterion: Recall that in the disastrous environment from the introduction the quenched survival probability could take very small values if a trap was present in the environment. A high jump rate was thus beneficial because it provides a ``hedge'' against such a scenario by spreading the mass over a large part of the environment. On the other hand in a static environment we can hope for a large value in the partition function $Z_t^\kappa$ if we find a good environment taking only large values close to the origin. In that situation having a small jump rate is beneficial, because we can take full advantage of the benefit without being force to move away from it. This is an indication that in the static case the correct notion is to compare $\E[f(Z^\kappa_t)]$ for different jump rate when $f$ is convex (i.e. risk-seeking) instead of concave (i.e. risk-averse).

It is an interesting question for future research to investigate the transition between static environments and the \Levy-type environments covered by Theorem \ref{thm:main}. More precisely for $\kappa_1\leq\kappa_2$ one might conjecture that if the environment has long-time correlations (as in Examples 1-3) then $Z_t^{\kappa_2}\preceq_{cx}Z^{\kappa_1}_t$ while $Z^{\kappa_1}_t\preceq_{cv}Z^{\kappa_2}_t$ holds if the environment has correlations that decay fast in time.

\subsection{Weakening the convolution property}\label{sec:conj}

In this section we only discuss the one-dimensional random polymer model, so let $T=\N$, $S=\Z$, $\Omega\coloneqq [-1,\infty)^{T\times S}$ and $F_t$ as in \eqref{eq:dtF}. For $p\in\M(S)$ we write $P^p$ for the laws of a random walk whose increments have distribution $p$, and we write $Z^p_t$ for the partition functions of $P^p$ corresponding to $F_t$.

Let now $p,q\in\M(S)$ be compactly supported increment distributions, and recall that in Theorem \ref{thm:main} we have seen that $Z^q_t\preceq_{cv}Z^p_t$ holds if $p\preceq_*q$. Before we have said that $p\preceq_*q$ means $p$ has more randomness than $q$. This implies that under $P^p$ the mass is spread out more evenly than under $P^q$, i.e. it is distributed over a larger set of paths. In this section we ask if this is enough. More precisely:
\begin{question}
Under what conditions on $p$ and $q$ do we have $Z^q_t\preceq_{cv}Z^p_t$ for all $\P$ satisfying \eqref{ass:spatial} and \eqref{ass:inte}?
\end{question}
We start by proving a necessary condition for $p$ and $q$. Recall that for simplicity we have assumed that $p$ and $q$ are compactly supported, so let $K$ be large enough that $p(\{-K,\dots,K\})=q(\{-K,\dots,K\})=1$. Let moreover $\pi$ and $\sigma$ be bijections $\{1,\dots,2K+1\}\to\{-K,\dots,K\}$ chosen in such a way that the functions $i\mapsto p(\pi(i))$ and $i\mapsto q(\sigma(i))$ are decreasing. In other words, $\pi$ and $\sigma$ encode the relative orders of the weights of $p$ and $q$. We say ``$p$ is majorized by $q$'' (written $p\preceq_{M}q$) if 
\begin{align}\label{eq:defmajo}
\sum_{i=1}^k p(\pi(i))\leq \sum_{i=1}^kq(\sigma(i))\quad\text{ for all } k=1,\dots,2K+1.
\end{align}
More generally for $p,q\in [0,\infty)^{\{-K,\dots,K\}}$ (not necessarily with total mass one) we say $p\preceq_M q$ if in addition to \eqref{eq:defmajo} we have
\begin{align*}
\sum_{a=-K}^Kp(a)=\sum_{a=-K}^Kq(a).
\end{align*}
Intuitively $p\preceq_M q$ means that the mass of $q$ is distributed in a more uneven fashion. Observe that the minimal element with respect to $\preceq_M$ is the uniform distribution on $\{-K,\dots,K\}$ (where the mass is spread evenly among all sites) whereas the maximal objects are the Dirac measures (where all mass concentrates on one site). The relation $\preceq_M$ is for example used to compare the distribution of wealth within societies, where it is related to the famous Gini-coefficient. We refer to \cite{olkin} for a survey of results about the majorization order. We observe that  $p\preceq_Mq$ is a necessary condition:
\begin{proposition}\label{prop:necc}
Assume $Z_t^q\preceq_{cv}Z^p_t$ for all $\P$ satisfying \eqref{ass:spatial} and \eqref{ass:inte}. Then $p\preceq_Mq$.
\end{proposition}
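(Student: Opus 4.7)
The plan is to construct, for each pair $(p,q)$ satisfying the hypothesis of the proposition, an explicit environment on which the concave comparison $Z_t^q\preceq_{cv}Z_t^p$ reduces to a scalar inequality between the unordered weight vectors $(p(a))_a$ and $(q(a))_a$; I will then invoke the Hardy--Littlewood--P\'olya characterization of majorization to deliver $p\preceq_M q$.

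I would fix $K$ large enough that both $p$ and $q$ are supported on $A\coloneqq\{-K,\dots,K\}$ and set $N\coloneqq 2K+1$. The candidate environment is concentrated at time $t=1$: for a uniformly random $r\in\{0,\dots,N-1\}$ and a constant $c>0$, set
\[
\omega(1,a)\coloneqq c\cdot\1\{a\equiv r\pmod{N}\},\qquad \omega(t,a)\coloneqq 0\text{ for }t\geq 2.
\]
Spatial shift invariance \eqref{ass:spatial} holds because translating the argument modulo $N$ permutes the support of $r$ uniformly, and \eqref{ass:inte} is immediate since $\omega$ is bounded. Since each $a\in A$ has its own residue class, the partition function simplifies to $Z_1^p=1+c\,p(a_r)$, where $a_r$ is the unique element of $A$ congruent to $r$ modulo $N$, and $a_r$ is uniform on $A$. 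Hence for every concave $f$,
\[
\E\big[f(Z_1^p)\big]=\frac{1}{N}\sum_{a\in A}f\bigl(1+c\,p(a)\bigr),
\]
with the analogous identity for $q$.

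Substituting into the hypothesis $\E[f(Z_1^q)]\leq\E[f(Z_1^p)]$ then gives $\sum_{a\in A}\phi(p(a))\leq\sum_{a\in A}\phi(q(a))$ for $\phi(y)\coloneqq-f(1+cy)$. As $f$ ranges over all concave functions $\R_+\to[-\infty,\infty)$, the function $\phi$ ranges over every convex function on $[0,1]$: given a target convex $\phi$, one sets $f(z)\coloneqq-\phi((z-1)/c)$ on $[1,1+c]$ and extends affinely using one-sided slopes to a concave function on all of $\R_+$. The Hardy--Littlewood--P\'olya theorem then yields $p\preceq_M q$.

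The only nonroutine step is the construction itself: assumption \eqref{ass:spatial} forbids supporting the environment on the finite window $A$, so the weights of $p$ and $q$ have to be smuggled into a spatially stationary random field, which the periodization with a single uniform phase $r$ accomplishes. Everything else reduces to matching the class of concave test functions appearing in $\preceq_{cv}$ with the class of convex test functions in the Hardy--Littlewood--P\'olya characterization.
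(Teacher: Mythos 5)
Your proof is correct and follows essentially the same route as the paper: construct a spatially stationary environment supported at time $t=1$, periodic on a torus of size $2K+1$ with a uniformly random phase, compute $Z_t$ exactly in terms of the increment weights, and invoke the Marshall--Olkin--Arnold (Hardy--Littlewood--P\'olya) characterization of majorization in terms of convex test functions. The only difference is cosmetic: the paper places hard obstacles $-1$ on all sites except one so that $Z_t^p(\omega^{(r)})=p(r)$ directly and the characterization applies with no change of variables, whereas you place a bonus $c>0$ at one site, obtaining $Z_1^p=1+c\,p(a_r)$ and then passing through the substitution $\phi(y)=-f(1+cy)$ together with the concave extension of $f$ beyond $[1,1+c]$ --- both are valid, but the obstacle construction is a bit shorter.
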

\begin{proof}
We construct a suitable $\P$: For $r\in\{-K,\dots,K\}$ define $\omega^{(r)}\in\Omega$ by
\begin{align*}
\omega^{(r)}(t,i)=\left\{\begin{matrix}0&\text{ if }t\geq 2\text{ or }(t=1\text{ and }i-r\equiv_K 0)\\-1&\text{ else. }\end{matrix}\right.
\end{align*}
Here $\equiv_K$ denotes equivalence in $\Z/_{\{-K,\dots,K\}}$, the torus of size $2K+1$. In words, at time $t=1$ there are hard obstacles on $\{-K,\dots,K\}\setminus\{r\}$ and the environment is trivial everywhere else. Let $R$ be uniformly distributed on $\{-K,\dots,K\}$ and let $\P$ be the law of $\omega^{(R)}$. Since $\omega$ is spatially invariant and bounded it is clear that $\P$ satisfies \eqref{ass:spatial} and \eqref{ass:inte}, so for every concave function $f$ 
\begin{align*}
(2K+1)\E[f(Z^p_t)]&=\sum_{r=-K}^K f(Z^p_t(\omega^{(r)}))=\sum_{r=-K}^Kf(p(r))\\&\leq \sum_{r=-K}^Kf(q(r))=\sum_{r=-K}^K f(Z^q_t(\omega^{(r)}))=(2K+1)\E[f(Z^q_t)].
\end{align*}
It is known that this is equivalent to $p\preceq_Mq$, see \cite[Theorem 3.C.1]{olkin}.
\end{proof}

Let us also point out that the relation $\preceq_*$ from Theorem \ref{thm:main} is stronger than $\preceq_M$, i.e. $p\preceq_*q$ implies $p\preceq_Mq$. This follows from the previous proposition together with Theorem \ref{thm:main}, or it can be checked directly (see \cite[Proposition 12.N.1]{olkin}).

However $\preceq_M$ is not a sufficient criterion on its own: Note that if $p$ is obtained from $q$ by permuting the weights in $\{-K,\dots.,K\}$ then $p\preceq_Mq$ and $q\preceq_Mp$. So if $\preceq_M$ was sufficient we would get $Z^p_t\preceq_{cv}Z^q_t\preceq_{cv}Z^p_t$, hence $Z^p_t\isDistr Z^q_t$ for some non-trivial choices of $\P$. But on the lattice this is clearly not true for $t>1$. Let us therefore place some restriction on the ordering of the weights: 
\begin{assumption}\label{ass:uni}
Both $p$ and $q$ are symmetric and unimodal, i.e. the functions $i\mapsto p(i)$ and $i\mapsto q(i)$ are symmetric and decreasing for $i\geq 0$.
\end{assumption}

We conjecture that this is enough:
\begin{conjecture}\label{conj}
Assume \eqref{ass:uni}. Then $p\preceq_M q$ if and only if $Z^q_t\preceq_{cv} Z^p_t$ for all $\P$ satisfying \eqref{ass:spatial} and \eqref{ass:inte}.
\end{conjecture}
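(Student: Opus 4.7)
The reverse direction is Proposition \ref{prop:necc}: the environment $\omega^{(R)}$ constructed there produces, whenever $p \not\preceq_M q$, a single-step concave functional separating $Z^q_1$ from $Z^p_1$. So the remaining content is the forward direction.

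For the forward direction, my plan is to use the extremal decomposition of symmetric unimodal measures. Writing $U_j$ for the uniform distribution on $\{-j,\dots,j\}$, every symmetric unimodal $p$ on $\{-K,\dots,K\}$ admits the unique representation $p = \sum_{j=0}^K a_j(p)\, U_j$ with $a_j(p) = (2j+1)(p(j) - p(j+1)) \geq 0$ (using $p(K+1) = 0$). A direct computation yields
$$\sum_{i=-k}^k p(i) = \sum_{j \leq k} a_j(p) + (2k+1) \sum_{j > k}\frac{a_j(p)}{2j+1},$$
which translates $p \preceq_M q$ into an explicit linear system in the coefficient sequences. The plan is then to connect $q$ to $p$ along the straight-line interpolation $p_t := (1-t)q + tp$ (which stays symmetric unimodal and is linearly decreasing in $\preceq_M$) by a piecewise-linear sequence of \emph{elementary moves} $q' = q \pm \eps(U_{j_2} - U_{j_1})$, and to prove $\preceq_{cv}$-monotonicity of $Z_t$ along each move separately. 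Note that restricting to ``upward only'' moves with $j_1 < j_2$ corresponds to stochastic dominance of $(a_j(p))$ over $(a_j(q))$, which is strictly stronger than $p \preceq_M q$, so the argument must accommodate moves in both directions relative to the fixed endpoint $q$.

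The heart of the matter is the single-move comparison. Theorem \ref{thm:main} does not apply directly because $U_{j_1}$ and $U_{j_2}$ are not related by convolution (as one sees from $\widehat U_{j_2}/\widehat U_{j_1}$ failing to be positive-definite). My proposal is a path-level Strassen coupling: at each time step, under $P^{q'}$ first sample a scale $J$ from $a(q')$ and then an increment uniformly from $\{-J,\dots,J\}$, doing the same under $P^q$ with $a(q)$, and couple the two scale sequences so that they coincide off a sparse random set of steps on which $(J^q_s, J^{q'}_s)$ takes the exceptional value $(j_1, j_2)$. Conditional on the scales and on the environment one then adds a further symmetric randomization (e.g.\ a shared random translation) that plays, \emph{on average over time}, the role that convolution plays in the proof of Theorem \ref{thm:main}, and one verifies the identity $\widehat Z^q = \E[\widehat Z^{q'} \mid \widehat Z^q]$ required by Strassen.

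I expect the hard part to be exactly this last layer. On the lattice $U_{j_2}$ cannot be obtained from $U_{j_1}$ by any shift-equivariant averaging (unlike on a tree, see Section \ref{sec:trees}), so a purely local one-step construction seems insufficient: the coupling must exploit the full time horizon $t$ to ``dissipate'' the discrepancy between $U_{j_1}$ and $U_{j_2}$ through intermediate steps. A successful proof may require either a delicate multi-step coupling exploiting the mixing of the random walk between elementary-move steps, or a duality argument on the cone of concave functionals that avoids explicit couplings altogether and instead characterizes extreme rays of the set of admissible $\P$.
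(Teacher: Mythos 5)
What you've written is an outline, not a proof, and there is no proof in the paper to compare it against: the statement you are addressing is labeled a \emph{conjecture}, and the text explicitly leaves the lattice case open. The paper establishes only the necessity half (Proposition~\ref{prop:necc}, which in fact needs neither symmetry nor unimodality), observes that $\preceq_M$ alone cannot be sufficient on the lattice because permuting weights preserves $\preceq_M$ in both directions while changing $Z^p_t$ in law for $t>1$ (hence the need for Assumption~\ref{ass:uni}), and then proves the analogous equivalence on the $K$-ary tree (Theorem~\ref{thm:trees}) as \emph{evidence} for the lattice conjecture rather than a resolution of it. So the forward direction on $\Z$ is genuinely open in this work.

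As for your sketch of that forward direction: the reduction to a chain of elementary moves within the unimodal cone spanned by the uniform distributions $U_0,\dots,U_K$ is a reasonable strategy, and the extremal decomposition $p=\sum_j a_j(p)U_j$ with $a_j(p)=(2j+1)(p(j)-p(j+1))$ is correct. But you acknowledge yourself that the single-move comparison between $Z^{q\pm\eps(U_{j_2}-U_{j_1})}_t$ and $Z^q_t$ is left unresolved, and this is precisely where a proof would have to do real work. Theorem~\ref{thm:main} gives no purchase because $U_{j_1}$ and $U_{j_2}$ are not $\preceq_*$-comparable, and the tree argument (Theorem~\ref{thm:trees}) depends essentially on elementary shifts that permute entire disjoint subtrees attached to a node, an operation with no lattice analogue since paths that separate on $\Z^d$ can re-meet. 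Your proposed ``dissipative'' multi-step coupling and your alternative duality over the cone of concave functionals are both speculative; nothing in the paper suggests either is known to work, and you present no lemma that would make them go through. In short: the easy direction follows from Proposition~\ref{prop:necc} as you say, the hard direction remains a conjecture both in the paper and in your proposal, and your write-up accurately diagnoses but does not fill the gap.
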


We close by mentioning a model where we expect monotonicity but Theorem \ref{thm:main} does not apply: In \cite{discretedisasters} and \cite{nakajima} the authors consider an i.i.d. Bernoulli environment of hard obstacles, i.e. 
\begin{align*}
\P(\omega(0,0)=0)=1-\P(\omega(0,0)=-1)=p\in(0,1).
\end{align*}
For $\alpha>0$ let $P^\alpha$ denote the random walk with increment distribution 
\begin{align*}
P^\alpha (X_{t+1}=i|X_t=j)&=C(\alpha)e^{-|i-j|^{\alpha}},
\end{align*}
where $C(\alpha)$ is the normalizing constant. It is shown that there exists $\lambda_0(\alpha)\in(-\infty,0]$ such that almost surely
\begin{align*}
\lim_{t\to\infty}\tfrac 1t\E[\log Z^\alpha_t]=\lim_{t\to\infty}\tfrac 1t\log Z^\alpha_t=\lambda_0(\alpha).
\end{align*}
Observe that in this case large values of $\alpha$ mean that the increments concentrate mostly on $\{-1,0,1\}$ while for $\alpha\to 0$ the distribution becomes more spread out. One would thus expect that $\lambda_0(\alpha)$ is decreasing in $\alpha$.

\subsection{A comparison result for random walks on trees}\label{sec:trees}

In this section we show that $\preceq_M$ is a necessary and sufficient criterion for polymers on trees. We begin by defining the model: Let $V$ denote the $K$-ary tree. That is, $V$ is cycle-free and has a distinguished vertex $\o$ of degree $K$ while all other vertices have degree $K+1$. We say that $\o$ is the root and we write $|v|$ for the graph-distance between $v$ and $\o$. We call $|v|$ the height of $v$ and let $V_t$ denote for the set of vertices of height $t$. For $v\in V$ and $i\in\{1,\dots,K\}$ we let $(v,i)$ denote the $i^{th}$ descendant of $v$, and we write $D(v)$ for the set of descendants of $v$. 

A path $x$ in $V$ is a function $\N\to V$ such that $|x_t|=t$ and $x_{t+1}\in D(x_t)$ for every $t\in\N$. That is, a path moves away from the root in each step. Let $\Sigma$ denote the set of paths on $V$. It is an elementary observation that if $x, y\in\Sigma(V)$ satisfy $x_s\neq y_s$, then $x_t\neq y_t$ for all $t\geq s$. Clearly this is a special property of the tree which does not hold on the lattice. 

Let $\Omega\coloneqq [-1,\infty)^V$ denote the set of environments and define $F_t$ as in \eqref{eq:dtF} for the random polymer model on the lattice. Since there is no group structure on $\Sigma$ we have to redefine \eqref{ass:spatial}:
\begin{definition}
A \textbf{shift} is a bijection $\theta\colon V\to V$ such that if $v$ is a descendant of $w$ then $\theta(v)$ is a descendant of $\theta(w)$. 
\end{definition}
In words, shifts are bijections that respect the tree structure. 

\begin{assumption}\label{ass:iid}
$\P$ is invariant under all shifts $\theta$. That is for all $A\in\F$ and all shifts $\theta$ 
\begin{align*}
\P(\{\omega(v)\colon v\in V\}\in A)=\P(\{\omega(\theta(v))\colon v\in V\}\in A).
\end{align*}
\end{assumption}
Note that this assumption is satisfied for the canonical example of an i.i.d. environment. We consider a special class of shifts:
\begin{definition}\label{def:elementary}
A shift $\theta$ is called \textbf{elementary} if there exist $v\in V$ and a permutation $\pi$ of $\{1,\dots,K\}$ such that $\theta(w)=w$ if $w$ is not a (strict) descendant of $v$, and such that $\theta(w)\coloneqq (v,\pi(a),v')$ if $w=(v,a,v')$ is a descendant of $v$.
\end{definition}
In words, an elementary shift permutes all subtrees attached to the node $v$ according to the permutation $\pi$, and leaves everything else invariant. See Figure \ref{fig:tree} for an illustration. 
\begin{figure}
\includegraphics[width=\textwidth]{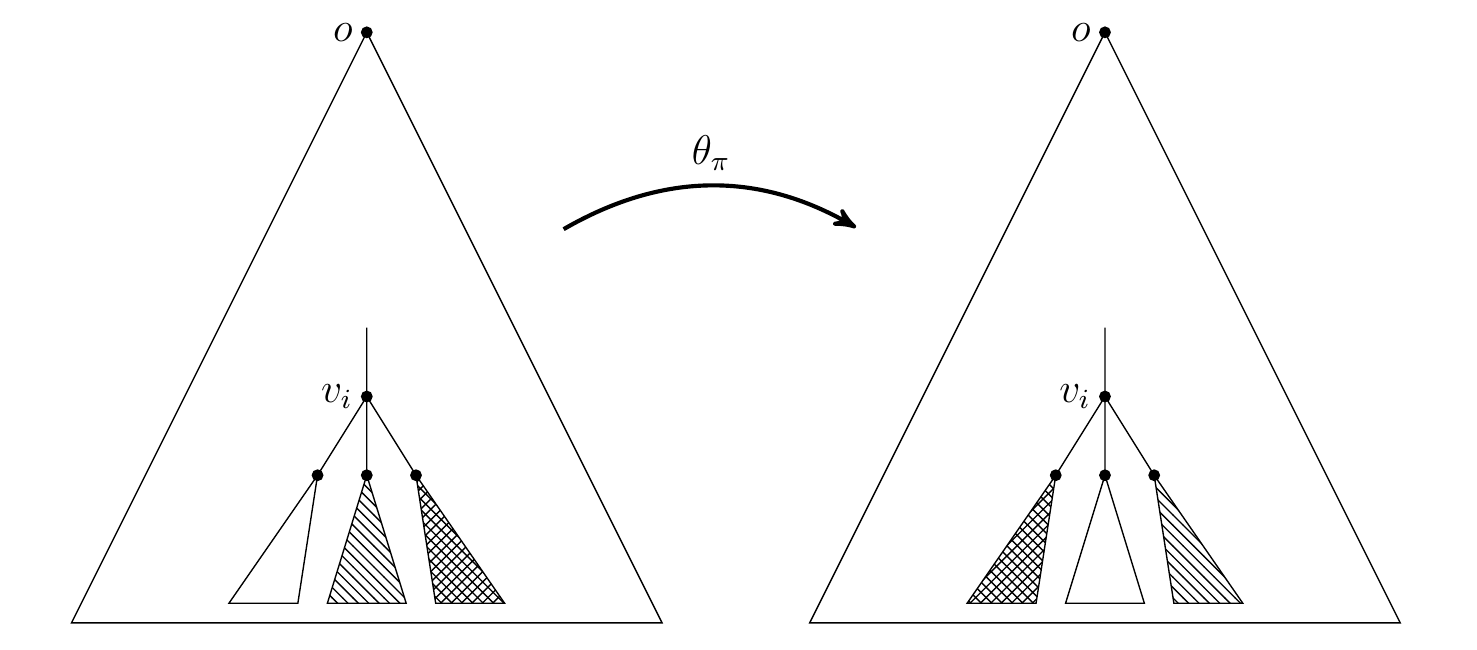}
\caption{In the case $K=3$ and $(\pi(1),\pi(2),\pi(3))=(2,3,1)$ the elementary shift $\theta$ associated to $v$ and $\pi$ permutes the subtrees attached to the descendants of $v$ while keeping the rest of the tree untouched.}\label{fig:tree}
\end{figure}

We finish the model by defining random walks on $V$: For $p\in\M(\{1,\dots,K\})$ let $P^p$ denote the law of the Markov chain with increment distribution $p$. More precisely $X_0=\o$ and for every $t\in\N,a\in\{1,\dots,K\}$
\begin{align*}
P^p(X_{t+1}=(v,a)|X_t=v)=p(a).
\end{align*}
In anticipation of the proof of Theorem \ref{thm:trees} we also introduce inhomogeneous random walks: If $p=\{p^{(v)}\colon v\in V\}$ is a collection of probability measures on $\{1,\dots,K\}$ we write $P^p$ for the Markov chain with transition probabilities
\begin{align*}
P^p(X_{t+1}=(v,a)|X_t=v)=p^{(v)}(a).
\end{align*}
We can now state the main result of this section:
\begin{theorem}\label{thm:trees}
Let $p,q\in\M(\{1,\dots,K\})$ and write $Z^p_t$ resp. $Z^q_t$ for the partition function of $P^p$ resp. $P^q$. Then $p\preceq_M q$ if and only if $Z^q_t\preceq_{cv}Z^p_t$ for all $\P$ satisfying \eqref{ass:inte} and \eqref{ass:iid}.
\end{theorem}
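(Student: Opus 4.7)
I would split the proof into the two directions; sufficiency is the substantial one.

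For necessity, the plan is to reuse the idea behind Proposition~\ref{prop:necc}. Let $R$ be uniformly distributed on $\{1,\dots,K\}$ and define $\omega^{(R)}$ by placing hard obstacles $\omega^{(R)}((\o,a)) = -1$ at every root-child with $a \neq R$ and setting the environment equal to $0$ everywhere else. Assumption~\ref{ass:iid} is satisfied because the only shifts that alter $\omega^{(R)}$ are elementary shifts at the root, and by construction the law of $\omega^{(R)}$ is invariant under those; Assumption~\ref{ass:inte} is trivial. Then $Z^p_t(\omega^{(R)}) = p(R)$ and $Z^q_t(\omega^{(R)}) = q(R)$ for every $t \geq 1$, so the hypothesis $Z^q_t \preceq_{cv} Z^p_t$ reduces to $\sum_{a=1}^K f(q(a)) \leq \sum_{a=1}^K f(p(a))$ for every concave $f$, which by \cite[Theorem~3.C.1]{olkin} is equivalent to $p \preceq_M q$.

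For sufficiency I would combine the Birkhoff--von~Neumann representation $p = \sum_\pi \lambda_\pi q^\pi$ (where $\pi$ ranges over permutations of $\{1,\dots,K\}$, $q^\pi(a) = q(\pi(a))$, and $(\lambda_\pi)$ is a probability vector) with a recursive single-vertex exchange on the tree. The central step is the following local lemma: for any inhomogeneous transition assignment $\{p^{(w)}\}_{w \in V}$ and any vertex $v$ such that $w \mapsto p^{(w)}$ restricted to $D(v) \setminus \{v\}$ is invariant under the $K$-fold permutation induced by elementary shifts at $v$, replacing $p^{(v)}$ by any $p'$ with $p' \preceq_M p^{(v)}$ produces a partition function that dominates the original in $\preceq_{cv}$. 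To prove the lemma I would condition on the walk being at $v$ at time $|v|$ and write
\begin{align*}
Z_t = M_v + \sum_{a=1}^K p^{(v)}(a)\, \widetilde\zeta_a,
\end{align*}
where $M_v$ gathers contributions from paths not passing through $v$ and $\widetilde\zeta_a$ depends only on the environment and transitions inside the subtree rooted at $(v,a)$. Assumption~\ref{ass:iid} applied to elementary shifts at $v$, together with the symmetry hypothesis, makes the joint law of $(M_v,\widetilde\zeta_1,\dots,\widetilde\zeta_K)$ invariant under permutations of the $\widetilde\zeta_a$. Setting $Z'_t = \sum_\pi \lambda_\pi Y_\pi$ with $Y_\pi := M_v + \sum_a p^{(v)}(\pi(a))\widetilde\zeta_a \isDistr Z_t$, Jensen's inequality for concave $f$ yields $\E f(Z_t) \leq \sum_\pi \lambda_\pi \E f(Y_\pi) = \E f(Z'_t)$.

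The theorem then follows by iteration: start from the homogeneous assignment $p^{(w)} \equiv q$ (so the initial partition function is $Z^q_t$) and sweep through the finitely many vertices of height $<t$ in order of decreasing height, updating $p^{(v)}$ from $q$ to $p$ at one vertex per step. At the moment a vertex $v$ is processed, every vertex in $D(v)\setminus\{v\}$ of height $<t$ has already been updated to $p$, while vertices of height $\geq t$ do not affect $Z_t$; hence the permutation-symmetry hypothesis of the local lemma holds automatically and each step gives a $\preceq_{cv}$-improvement. After all height-$<t$ vertices have been processed the assignment is uniformly $p$ and the final partition function is $Z^p_t$, so transitivity of $\preceq_{cv}$ yields $Z^q_t \preceq_{cv} Z^p_t$. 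The main obstacle I anticipate is the bookkeeping needed to verify the joint permutation-invariance of $(M_v,\widetilde\zeta_1,\dots,\widetilde\zeta_K)$ in the local lemma: the key point is that an elementary shift at $v$ acts only on strict descendants of $v$, leaving $M_v$ pointwise fixed, while the symmetry hypothesis ensures the $\widetilde\zeta_a$ permute among themselves; once this is in place the rest is Birkhoff--von~Neumann together with a concavity estimate.
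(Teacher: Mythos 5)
Your proof is correct, and the overall strategy matches the paper's: transform $P^q$ into $P^p$ one vertex at a time, sweeping through the height-$<t$ vertices in decreasing-height order, and at each vertex exploit the exchangeability coming from elementary shifts (Assumption~\ref{ass:iid}) plus concavity. Where you diverge is in the execution of the local step. The paper fixes $\omega$, builds the $K!$-dimensional vectors $c(\cdot,\omega)=pM$ and $d(\cdot,\omega)=qM$ with $M(a,\pi)=\widehat W_i(a,\omega(\pi))$, proves the deterministic majorization $c(\cdot,\omega)\preceq_M d(\cdot,\omega)$ by checking the column-permutation condition of \cite[Proposition~5.A.17a]{olkin}, and then converts to concave orderings via \cite[Theorem~3.C.1]{olkin}. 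You instead apply Hardy--Littlewood--P\'olya and Birkhoff--von~Neumann once at the level of the increment law, obtain the pointwise identity $Z'_t=\sum_\pi\lambda_\pi Y_\pi$ with $Y_\pi\isDistr Z_t$, and conclude by Jensen. Your route is more elementary and avoids the matrix bookkeeping; the paper's route makes the intermediate $K!$-dimensional majorization structure explicit, which some readers may find illuminating. Both are sound.

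Two small points. First, the displayed chain should read $\E f(Z_t)=\sum_\pi\lambda_\pi\E f(Y_\pi)\leq\E f(Z'_t)$: the first relation is an \emph{equality} by $Y_\pi\isDistr Z_t$, and the inequality sits on the second step, coming from the pointwise bound $f\bigl(\sum_\pi\lambda_\pi Y_\pi\bigr)\geq\sum_\pi\lambda_\pi f(Y_\pi)$ for concave $f$; you have the inequality and equality swapped. Second, when you describe $\widetilde\zeta_a$ as ``depending only on the environment and transitions inside the subtree rooted at $(v,a)$,'' note that it also carries the common factor (the weight and transition mass accumulated along $\o\to v$); this does not affect the argument since that factor is the same for every $a$ and is untouched by elementary shifts at $v$, but the phrasing should be adjusted. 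With these cosmetic fixes the proposal is a complete alternative proof.
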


\begin{proof}[Proof of ``$\Leftarrow$'']
This is identical to the proof of Proposition \ref{prop:necc} since only the first step of the random walk is relevant in that proof.
\end{proof}
\begin{proof}[Proof of ``$\Rightarrow$'']
We show $Z_{t+1}^q\preceq_{cv}Z_{t+1}^p$. Let $\overline V_t$ denote the set of node of height at most $t$ and consider an enumeration $\overline V_t=\{v_1,v_2,\dots,v_N\}$ such that $i\mapsto |v_i|$ is decreasing. Note that this ensures that all descendants of $v_i$ in $\overline V_t$ are contained in $\{v_1,\dots,v_i\}$, for every $i=1,...,N$. Our aim is to incrementally transform $P^q$ into $P^p$ by considering a sequence of inhomogeneous random walks $P^{r_0},\dots,P^{r_N}$ such that $P^{r_0}=P^q$ and $P^{r_N}=P^p$, and such that from $P^{r_i}$ to $P^{r_{i+1}}$ we only change the increment distribution at one node. More precisely for $i=0,\dots,N$ and $v\in \overline V_t$ let
\begin{align*}
r_i^{(v_j)}\coloneqq \left\{\begin{matrix}p&\text{ if }j<i\\[1mm]q&\text{ else. }\end{matrix}\right.
\end{align*}
In particular $P^{r_0}=P^q$ and $P^{r_N}=P^q$. Let $W_i$ denote the partition function of $P^{r_i}$, and note that it is enough to show for all $i=0,\dots,N-1$ 
\begin{align*}
W_i\preceq_{cv} W_{i+1}.
\end{align*}
% Note that $P^{r_i}$ and $P^{r_{i+1}}$ agree on all sites except for $v_i$. 
Let $x$ be a path that visits $v_i$, and note that if $F_{|v_i|}(\omega,x)=0$ then $F_t(\omega,y)=0$ for all $t\geq |v_i|$ and all paths visiting $v_i$. So on $\{F_{|v_i|}(\omega,x)=0\}$ there is nothing to prove since $W_i(\omega)=W_{i+1}(\omega)$. We therefore assume $F_{|v_i|}(\omega,x)>0$, and in this case we can write
\begin{align*}
W_i(\omega)&=A(\omega)+b(\omega)\sum_{a=1}^k q(a)\widehat W_i(a,\omega)\\
W_{i+1}(\omega)&=A(\omega)+b(\omega)\sum_{a=1}^k p(a)\widehat W_i(a,\omega),
\end{align*}
where
\begin{align*}
\widehat W_i(a,\omega)&\coloneqq E\,^{r_i}\Big[\frac{F_{t+1}(\omega,X)}{F_{|v_i|}(\omega,X)}\Big|X_{|v_i|+1}=(v_i,a)\Big]\\
A(\omega)&\coloneqq E^{r_i}\big[F_{t+1}(\omega,X)\1\{X_{|v_i|}\neq v_i\}\big]\\
b(\omega)&\coloneqq E^{r_i}\big[F_{|v_i|}(\omega,X)\1\{X_{|v_i|}=v_i\}\big].
\end{align*}

% We have used that the difference between $W_i$ and $W_{i+1}$ only comes from the paths passing through $v_i$, and that because of the tree structure these paths can never again interact with the rest of the environment. We can therefore manipulate the part of the environment in the subtree starting from $v_i$ while keeping the remaining terms of the partition function unchanged.

In words, $A(\omega)$ represents the contributions from paths that do not visit $v_i$ while in the second term we have the paths that visit $v_i$. This contribution can be split into the common part $b(\omega)$ (collected at the nodes $\o\to v_i$) and the remaining contribution $\widehat W_i(a,\omega)$ depending on which descendant of $v_i$ the path visits at time $|v_i|+1$. Note that this decomposition is only possible on a tree.

For $\pi$ a permutation of $\{1,\dots,K\}$ let $\theta^\pi$ denote the elementary shift (recall Definition \ref{def:elementary}) associated to $v_i$ and $\pi$. Let $\omega(\pi)$ be the environment obtained by
\begin{align*}
(\omega(\pi))(v)\coloneqq \omega(\theta^\pi(v)).
\end{align*}

Note that $\omega(\pi)$ has the same distribution as $\omega$ by \eqref{ass:iid}. Moreover by our choice for the enumeration $v_1,\dots,v_N$ we know that all descendants of $v_i$ have the same increment distribution, so that
\begin{align}\label{eq:shifted}
\widehat W_i(a,\omega(\pi))=\widehat W_i(\pi^{-1}(a),\omega).
\end{align}

Let $\S$ denote the set of permutations of $\{1,\dots,K\}$ and for $\pi\in\S$ define 
\begin{align*}
c(\pi,\omega)&\coloneqq \sum_{a=1}^K p(a)\widehat  W_i(a,\omega(\pi))\\
d(\pi,\omega)&\coloneqq \sum_{a=1}^K q(a)\widehat  W_i(a,\omega(\pi))
\end{align*}
In what follows we regard $c(\cdot,\omega)$ and $d(\cdot,\omega)$ as $K!$-dimensional real vectors.
\begin{claim}
For all $\omega$ we have $c(\cdot,\omega)\preceq_M d(\cdot,\omega)$.
\end{claim}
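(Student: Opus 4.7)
My plan is to lift the majorization $p\preceq_M q$ on $\R^K$ to a majorization of the $\S$-indexed vectors $c(\cdot,\omega)$ and $d(\cdot,\omega)$ in $\R^{\S}$, using the shift identity \eqref{eq:shifted}. As a first step, I apply \eqref{eq:shifted} and reindex by $b=\pi^{-1}(a)$ to obtain
\begin{align*}
c(\pi,\omega)=\sum_{b=1}^K p(\pi(b))\,w_b,\qquad d(\pi,\omega)=\sum_{b=1}^K q(\pi(b))\,w_b,
\end{align*}
where $w_b\coloneqq \widehat W_i(b,\omega)\geq 0$ is a fixed vector depending only on $\omega$. Thus $c$ and $d$ depend on $\pi$ only through the permuted weight vectors $p\circ\pi$ and $q\circ\pi$, which is exactly where the information about $p$ and $q$ will enter.

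Next, I invoke the Hardy--Littlewood--P\'olya theorem: $p\preceq_M q$ is equivalent to $p=Dq$ for some $K\times K$ doubly stochastic matrix $D$. By Birkhoff--von Neumann I decompose $D=\sum_{\tau\in\S}\lambda_\tau P_\tau$ as a convex combination of permutation matrices ($\lambda_\tau\geq 0$, $\sum_\tau\lambda_\tau=1$), so that $p(a)=\sum_{\tau}\lambda_\tau q(\tau(a))$ for every $a\in\{1,\dots,K\}$. Substituting this into the formula for $c(\pi,\omega)$ and swapping the two sums yields the clean identity
\begin{align*}
c(\pi,\omega)=\sum_{\tau\in\S}\lambda_\tau\,d(\tau\pi,\omega).
\end{align*}

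Finally, I recognise the right-hand side as a doubly stochastic matrix action on the $\S$-indexed vector $d(\cdot,\omega)$. Define $M\in\R^{\S\times\S}$ by $M_{\pi,\pi'}\coloneqq \lambda_{\pi'\pi^{-1}}$, so that $c(\pi,\omega)=\sum_{\pi'}M_{\pi,\pi'}d(\pi',\omega)$. Since $\tau\mapsto\tau\pi$ and $\tau\mapsto\pi^{-1}\tau$ are bijections of $\S$, both row and column sums of $M$ equal $\sum_\tau\lambda_\tau=1$; hence $M$ is doubly stochastic and a second application of Hardy--Littlewood--P\'olya on $\R^{\S}$ delivers $c(\cdot,\omega)\preceq_M d(\cdot,\omega)$, as claimed. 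The only conceptually non-trivial step is the observation that the regular action of $\S$ on itself promotes the scalar Birkhoff--von Neumann decomposition of $D$ to a doubly stochastic matrix on $\R^{\S}$; once this is in place, the remainder of the argument is bookkeeping of indices.
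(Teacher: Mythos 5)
Your proof is correct, and it reaches the same conclusion as the paper by a genuinely different route. The paper observes that $c=pM$ and $d=qM$ for the matrix $M(a,\pi)=\widehat W_i(a,\omega(\pi))$, verifies via \eqref{eq:shifted} that the set of columns of $M$ is closed under permutations of the row indices, and then cites \cite[Propositions 5.A.17 and 5.A.17.a]{olkin}, which state precisely that right-multiplication by such a matrix preserves majorization. You instead make the argument self-contained: you use the Hardy--Littlewood--P\'olya characterization of $\preceq_M$ to write $p=Dq$, pass to a Birkhoff--von Neumann decomposition $D=\sum_\tau\lambda_\tau P_\tau$, and push the resulting convex combination through the identity $c(\pi,\omega)=\sum_\tau\lambda_\tau d(\tau\pi,\omega)$ to exhibit an explicit doubly stochastic matrix on $\R^{\S}$ (a group-convolution matrix $M_{\pi,\pi'}=\lambda_{\pi'\pi^{-1}}$), and conclude by Hardy--Littlewood--P\'olya again. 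Both approaches hinge on the same structural input, namely \eqref{eq:shifted} and the fact that all descendants of $v_i$ carry the same increment distribution under $P^{r_i}$. Your version replaces the specialized reference by two standard theorems and makes the doubly stochastic witness explicit, which is more elementary and arguably more transparent; the paper's version is shorter once one accepts the cited proposition. One small bookkeeping point you could make explicit is that $c=Md$ with $M$ doubly stochastic automatically forces $\sum_\pi c(\pi,\omega)=\sum_\pi d(\pi,\omega)$, which is the equal-mass half of the definition of $\preceq_M$ used in the paper.
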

Let us first see how we can apply the claim: Let
\begin{align*}
C(\pi,\omega)&\coloneqq W_{i+1}(\omega(\pi))=A(\omega(\pi))+b(\omega(\pi))c(\omega,\pi)=A(\omega)+b(\omega)c(\pi,\omega),\\
D(\pi,\omega)&\coloneqq W_i(\omega(\pi))=A(\omega(\pi))+b(\omega(\pi))d(\omega,\pi)=A(\omega)+b(\omega)d(\pi,\omega).
\end{align*}
We have used that $\omega$ and $\omega(\pi)$ agree everywhere except at the descendants of $v_i$, so that $A$ and $b$ are invariant under the elementary shift associated to $v_i$ and $\pi$. So from the claim it is clear that 
\begin{align*}
C(\cdot,\omega)\preceq_M D(\cdot,\omega).
\end{align*}
Due to \cite[Theorem 3.C.1]{olkin} we get for all $f$ concave 
\begin{align}\label{eq:concavemajo}
\frac{1}{|\S|}\sum_{\pi\in S}f(C(\pi,\omega))\geq \frac{1}{|\S|}\sum_{\pi\in S}f(D(\pi,\omega)).
\end{align}
Let $\omega$ and $\Pi$ be independent, $\omega$ with distribution $\P$ and $\Pi$ chosen uniformly from $\S$. By \eqref{ass:iid} we see that $\omega(\Pi)$ has law $\P$ as well, and from \eqref{eq:concavemajo} 
\begin{align*}
\E[f(W_{i+1}(\omega))]%&=\E[f(W_{i+1}(\omega(\Pi)))]\\
&=\E\Big[\frac{1}{|\S|}\sum_{\pi\in \S}f(W_{i+1}(\omega(\pi)))\Big]\\
&\geq\E\Big[\frac{1}{|\S|}\sum_{\pi\in \S}f(W_{i}(\omega(\pi)))\Big]%\\
%&=\E[f(W_i(\omega(\Pi)))]
=\E[f(W_i(\omega))].
\end{align*}
\end{proof}

\begin{proof}[Proof of the claim]
Consider the matrix $M\in[0,\infty)^{\{1,\dots,K\}\times \S}$ defined by
\begin{align*}
M(a,\pi)\coloneqq \widehat  W_i(a,\omega(\pi)).
\end{align*}
Clearly that $c=p M$ and $d=q M$, where we interpret $p$ and $q$ as $K$-dimensional row vectors. The claim then follows from \cite[Proposition 5.A.17]{olkin} after verifying that the matrix $M$ has the correct form. But this is easily done with the help of  \cite[Proposition 5.A.17a]{olkin}, which states that it is enough to check the following: If $m=(m_1,\dots,m_K)$ is a column of $M$ and $\sigma\in\S$ then
\begin{align*}
\widehat  m\coloneqq (m_{\sigma(1)},\dots,m_{\sigma(K)})
\end{align*}
is also a column of $M$. To see this let $m$ be the column corresponding to $\pi\in S$, i.e. $m_a=\widehat W_i(a,\omega(\pi))$. Then from \eqref{eq:shifted}
\begin{align*}
(\widehat m)_a=m_{\sigma(a)}=\widehat W_i(\sigma(a),\omega(\pi))=\widehat W_i(a,\omega(\sigma^{-1}\circ\pi)).
\end{align*}
Thus $\widehat m$ is the column corresponding to $\sigma^{-1}\circ \pi\in\S$.
\end{proof}

\section*{Acknowledgments}
The author would like to thank Noam Berger, David Criens, Lexuri Fernandez and Nina Gantert for carefully proofreading the manuscript and for many helpful comments.

\end{document}